\documentclass[11pt,a4paper]{article}

\usepackage{amsmath,amssymb}
\usepackage{mathrsfs,amsfonts}
\usepackage{bm}
\usepackage{graphicx}
\usepackage{epsfig}
\usepackage{indentfirst}
\usepackage{extarrows}
\usepackage{amsthm}

\setlength{\hoffset}{-0.5in}
\setlength{\voffset}{-1in}
\setlength{\textheight}{24.5cm}
\setlength{\textwidth}{16cm}
\setlength{\parindent}{2em}

\begin{document}


\title{\textbf{The existence and nonexistence of global solutions for a semilinear heat equation on graphs}}
\author{Yong Lin \quad Yiting Wu }

\date{}
\maketitle

\vspace{-12pt}


\begin{minipage}{145mm}
\noindent{\small\textbf{Abstract} \; Let $G=(V,E)$ be a finite or locally finite connected weighted graph, $\Delta$ be the usual graph Laplacian. Using heat kernel estimate, we prove the existence and nonexistence of global solutions for the following semilinear heat equation on $G$
\begin{equation*}
\left\{
\begin{array}{lc}
u_t=\Delta u + u^{1+\alpha} &\, \text{in $(0,+\infty)\times V$,}\\
u(0,x)=a(x) &\, \text{in $V$.}
\end{array}
\right.
\end{equation*}
We conclude that, for a graph satisfying curvature dimension condition
$CDE'(n,0)$ and $V(x,r)\simeq r^m$, if $0<m\alpha<2$, then the non-negative solution $u$ is not global, and if $m\alpha>2$, then there is a non-negative global solution $u$ provided that the initial value is small enough. In particular, these results are true on lattice $\mathbb{Z}^m$.
\newline\textbf{Keywords}: Semilinear heat equation, Global existence, Heat kernel estimate
\newline\noindent \textbf{2010 Mathematics Subject Classification}: 35A01; 35K91; 35R02; 58J35
}
\end{minipage}

\vspace{12pt}

\section{Introduction}

The existence or nonexistence of global solutions to a simple system
\begin{equation*}
\tag{1.1}
\left\{
\begin{array}{lc}
u_t=\Delta u + u^{1+\alpha} \quad (t>0, x\in \mathbb{R}^m),\\
u(0,x)=a(x) \quad (x\in \mathbb{R}^m),
\end{array}
\right.
\end{equation*}
has been extensively studied since 1960s. One of the most important results about it is from Fujita \cite{Fujita}.
Fujita claimed that, if $0<m\alpha<2$, then there does not exist a non-negative global solution for any non-trivial non-negative initial data. On the other hand, if $m\alpha>2$, then there exists a global solution for a sufficiently small initial data. It is clear that Fujita's result does not include the critical exponent
$\alpha=\frac{2}{m}$. The nonexistence of global solutions for
critical exponent was proved in \cite{Hayakawa,Kobayashi}.

Recently, the study of equations on graphs has attracted attention from many researchers in various fields
(see \cite{CLC,gly1,gly2,gly3,LCY,XXM} and references therein).
Grigoryan et al. \cite{gly1,gly2,gly3} established existence results for Yamabe type equations and some nonlinear elliptic equations on graphs.
The solutions of heat equation and its variations on graphs have also been investigated by many authors due to its wide range of applications ranging from modelling energy flows through a network to processing image \cite{Network,Image}.
Chung et al. \cite{CLC} considered the extinction and positivity of the solutions of the Dirichlet boundary value problem for $u_t=\Delta u - u^q$ with $q>0$ on a network.

In \cite{XXM}, Xin et al. studied the blow-up properties of the Dirichlet boundary value problem for $u_t=\Delta u + u^q$ with $q>0$ on a finite graph. They concluded that if $q\leq 1$, every solution is global, and if $q> 1$ and under some suitable conditions, the nontrivial solutions blow up in finite time.
Different from \cite{XXM}, in this paper we consider the sufficient conditions for existence or nonexistence of global solutions of the Cauchy problem for $u_t=\Delta u + u^{1+\alpha}$ with $\alpha >0$ on a finite or locally finite graph.

From another perspective, the problem discussed in this paper can be regarded as a discrete analogue of the problem (1.1), that is,
\begin{equation}
\tag{1.2}
\left\{
\begin{array}{lc}
u_t=\Delta u + u^{1+\alpha} &\, \text{in $(0,+\infty)\times V$,}\\
u(0,x)=a(x) &\, \text{in $V$.}
\end{array}
\right.
\end{equation}

Motivated by \cite{Fujita}, we find that the key technical point of proving the existence of global solutions is the estimate of heat kernel. In \cite{BHLLMY},
Bauer et al. obtained the Gaussian upper bound for a graph satisfying $CDE(n,0)$, furthermore, in \cite{HLLY}, Horn et al. derived the Gaussian lower bound for a graph satisfying $CDE'(n,0)$. In addition, Lin et al. \cite{LW} used the volume growth condition to obtain a weaker on-diagonal lower estimate of heat kernel on graphs for large time.
Using these heat kernel estimates, we can prove the existence and nonexistence of global solutions for problem (1.2) on finite or locally finite graphs.

The results of Fujita \cite{Fujita} reveal that the dimension of the space and the degree of non-linearity of the equation have a combined effect on deciding whether a solution of (1.1) exists
globally in Euclidean space. It is worth noting that the main results of this paper exactly show that, for a graph satisfying $CDE'(n,0)$ and $V(x,r)\simeq r^m$, the behaviors of the solutions for problem (1.2) strongly depend on $m$ and $\alpha$. In particular, for lattice $\mathbb{Z}^m$, we have similar results of Fujita \cite{Fujita} on Euclidean space $\mathbb{R}^m$.

The rest of the paper is organized as follows. In Section 2, we introduce some concepts, notations and known results which are essential to prove the main results of this paper.
In Section 3, we formally state our main results. In Sections 4 and 5, we respectively prove the
nonexistence and existence of global solutions for problem (1.2).
In Section 6, we study the behavior of the solutions for problem (1.2) under the curvature condition $CDE'$.
In Section 7, we give an example to explain our conclusions intuitively. Meanwhile,
we also provide a numerical experiment to demonstrate the example.

\section{Preliminaries}

Throughout the paper, we assume that $G=(V,E)$ is a finite or locally finite connected graph and contain neither loops nor multiple edges, where $V$ denotes the vertex set and $E$ denotes the edge set. We write $y\sim x$ if $y$ is adjacent to $x$, or equivalently $\overline{xy}\in E$. For each vertex $x$, its degree is defined by
$$\deg(x)=\#\{y\in V:y\sim x\}.$$

We allow the edges on the graph to be weighted. Weights are given by a function $\omega: V \times V\rightarrow[0,\infty)$, that is, the edge $\overline{xy}$ has weight $\omega_{xy}\geq0$ and $\omega_{xy}=\omega_{yx}$.
Furthermore, let $\mu: V\rightarrow \mathbb{R}^+$ be a positive finite measure on the vertices of the $G$. In this paper,
all the graphs in our concern are assumed to satisfy
$$D_\omega:=\frac{\mu_{\max}}{\omega_{\min}}<\infty$$
and
$$D_\mu:=\max_{x\in V}\frac{m(x)}{\mu(x)}<\infty,$$
where $\omega_{\min}:=\inf_{\overline{xy}\in E}\omega_{xy}>0$ and
$m(x):=\sum_{y\sim x}\omega_{xy}.$

\subsection{Laplace operators on graphs}

Let $C(V)$ be the set of real functions on $V$. For any $1\leq p<\infty$, we denote  by
$$\ell^p(V,\mu)=\left \{f\in C(V):\sum_{x \in V} \mu(x)|f(x)|^p<\infty\right \}$$
the set of $\ell^p$ integrable functions on $V$ with respect to
the measure $\mu$.  For $p=\infty$, let
$$\ell^{\infty}(V,\mu)=\left\{f\in C(V):\sup_{x\in V}|f(x)|<\infty\right\}.$$

For any function $f\in C(V)$, the $\mu$-Laplacian $\Delta$ of $f$ is defined by
$$\Delta f(x)=\frac{1}{\mu(x)}\sum_{y\sim x}\omega_{xy}(f(y)-f(x)),$$
it can be checked that $D_\mu<\infty$ is equivalent to the $\mu$-Laplacian $\Delta$ being bounded on $\ell^p(V,\mu)$ for
all $p\in [1,\infty]$ (see \cite{HAESELER}).
The special cases of $\mu$-Laplacian operators are the cases where $\mu\equiv 1$, which is the standard graph Laplacian, and the case where $\mu(x)=\sum_{y\sim x}\omega_{xy}=\deg(x)$, which yields the normalized graph Laplacian.

The gradient form $\Gamma$ associated with a $\mu$-Laplacian is defined by
$$\Gamma(f,g)(x)=\frac{1}{2\mu(x)}\sum_{y\sim x}\omega_{xy}(f(y)-f(x))(g(y)-g(x)).$$
We write $\Gamma(f)=\Gamma(f,f)$.

The iterated gradient form $\Gamma_2$ is defined by
$$2\Gamma_2(f,g)=\Delta\Gamma(f,g)-\Gamma(f,\Delta g)-\Gamma(\Delta f,g).$$
We write $\Gamma_2(f)=\Gamma_2(f,f).$

Besides, the integration of a function $f\in C(V)$ is defined by
$$\int_V fd\mu=\sum_{x\in V}\mu(x)f(x).$$

The connected graph can be endowed with its graph distance $d(x,y)$, i.e. the smallest number of edges of a path between two vertices $x$ and $y$, then we define balls $B(x,r)=\{y\in V:d(x,y)\leq r\}$ for any $r\geq0$. The volume of a subset $A$ of $V$ can be written as $V(A)$ and $V(A)=\sum_{x\in A}\mu(x)$, for convenience, we usually abbreviate $V\big(B(x,r)\big)$ by $V(x,r)$. In addition, a graph $G$ satisfies a uniform volume growth of degree $m$, if for all $x\in V$, $r\geq 0$,
$$V(x,r)\simeq r^m,$$
that is, there exists a constant $c'\geq 1$, such that $\frac{1}{c'}r^m\leq V(x,r) \leq c'r^m$.

\subsection{The heat kernel on graphs}

We say that a function $p:(0,+\infty)\times V \times V\rightarrow \mathbb{R}$ is a fundamental solution of the heat
equation $u_t=\Delta u$ on $G=(V,E)$, if for any bounded initial condition $u_0:V\rightarrow \mathbb{R}$, the function
$$u(t,x)=\sum_{y\in V}p(t,x,y)u_0(y) \quad (t>0, \, x\in V)$$
is differentiable in $t$ and satisfies the heat equation, and for any $x\in V$,
$\lim\limits_{t\rightarrow0^+}u(t,x)=u_0(x)$ holds.

For completeness, we recall some important properties of the heat kernel $p(t,x,y)$, as follows:
\newtheorem{proposition}{\textbf{Proposition}}[section]
\begin{proposition}[see \cite{HLLY,RW}]
\textnormal{For $t,s>0$ and any $x,y\in V$, we have\\
(i) \, $p(t,x,y)=p(t,y,x)$,\\
(ii) \, $p(t,x,y)> 0$,\\
(iii) \, $\sum_{y\in V}\mu(y)p(t,x,y)\leq 1$,\\
(iv) \, $\partial_t p(t,x,y)=\Delta_xp(t,x,y)=\Delta_yp(t,x,y)$,\\
(v) \, $\sum_{z\in V}\mu(z)p(t,x,z)p(s,z,y)=p(t+s,x,y)$.}
\end{proposition}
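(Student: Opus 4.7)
The plan is to realise $p(t,x,y)$ as the integral kernel of the heat semigroup generated by $\Delta$ on $\ell^{2}(V,\mu)$, and to read off each of (i)--(v) from general properties of that semigroup. Since $D_\mu<\infty$, the operator $\Delta$ is bounded on $\ell^{2}(V,\mu)$, and using $\omega_{xy}=\omega_{yx}$ together with Fubini one checks that $\Delta$ is self-adjoint and non-positive with respect to the inner product $\langle f,g\rangle=\sum_{x}\mu(x)f(x)g(x)$. The bounded semigroup
\[
P_t:=e^{t\Delta}=\sum_{k=0}^{\infty}\frac{t^{k}\Delta^{k}}{k!}
\]
is therefore well defined for every $t\geq 0$, and I would put $p(t,x,y):=\mu(y)^{-1}(P_t\delta_y)(x)$, where $\delta_y$ is the indicator of $\{y\}$. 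Uniqueness of the fundamental solution in the class of bounded data would be handled separately, by a standard maximum-principle argument that uses boundedness of $\Delta$.

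Granted this construction, items (i), (iv) and (v) fall out of operator theory. Symmetry (i) is the pointwise translation of $P_t=P_t^{*}$, and the normalization factor $\mu(y)^{-1}$ is exactly what converts self-adjointness with respect to $\langle\cdot,\cdot\rangle$ into the identity $p(t,x,y)=p(t,y,x)$. For (iv), differentiating the series for $P_t$ termwise gives $\partial_tP_t=\Delta P_t=P_t\Delta$ in operator norm; applying either side to $\delta_y$ and reading off the $x$-coordinate yields $\partial_tp(t,x,y)=\Delta_xp(t,x,y)$, while the adjoint form gives $\Delta_yp(t,x,y)$. The semigroup law $P_{t+s}=P_tP_s$ applied to $\delta_y$ and evaluated at $x$, expanded as a sum over an intermediate vertex, produces (v) once one tracks the factors of $\mu$ carefully.

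The parts that require genuine work are the positivity statement (ii) and the sub-Markov inequality (iii). For (ii) I would use the Euler approximation
\[
P_t=\lim_{n\to\infty}\Bigl(I+\tfrac{t}{n}\Delta\Bigr)^{n},
\]
observing that for $n$ large enough the matrix entries of $I+\frac{t}{n}\Delta$ are all non-negative (the assumption $D_\mu<\infty$ bounds the diagonal correction uniformly), so non-negativity of $P_t$ is inherited in the limit; the strict inequality $p(t,x,y)>0$ then follows from connectedness of $G$, because some power $\Delta^{k}$ contributes a strictly positive $(x,y)$-entry along a shortest path from $x$ to $y$, and for small $t$ this term dominates any potential cancellation, after which the semigroup law propagates positivity to all $t$. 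For (iii) I would apply $P_t$ to the constant function $\mathbf{1}$: formally $\Delta\mathbf{1}=0$, so $P_t\mathbf{1}=\mathbf{1}$, but since $\mathbf{1}\notin\ell^{2}(V,\mu)$ when $V$ is infinite one has to argue by monotone approximation along an exhaustion of $V$ by finite subsets, and use the positivity established in (ii) to pass to the limit.

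The main obstacle is precisely this last step: making (ii) and (iii) rigorous on a locally finite graph requires showing that the heat semigroup built through the $\ell^{2}$ functional calculus agrees pointwise with the natural extension to bounded functions, which amounts to justifying exchanges of infinite sums with limits in the power series for $P_t$. Once this interchange is in place, the verifications above become routine and (i)--(v) all follow.
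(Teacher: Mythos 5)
The paper offers no proof of this proposition: it is quoted verbatim from the references (Horn--Lin--Liu--Yau and Wojciechowski), so there is no in-paper argument to match yours against. Your outline is correct and is the standard route when the Laplacian is bounded, which it is here because $D_\mu<\infty$: define $P_t=e^{t\Delta}$ by the norm-convergent power series, set $p(t,x,y)=\mu(y)^{-1}(P_t\delta_y)(x)$, and read off (i), (iv), (v) from self-adjointness, termwise differentiation, and the semigroup law, while (ii) and (iii) come from the Euler approximation $(I+\tfrac{t}{n}\Delta)^n$, whose factors are (sub)stochastic matrices once $n\geq tD_\mu$. This differs from what the cited sources actually do: Wojciechowski constructs $p$ as the increasing limit of Dirichlet heat kernels on an exhaustion by finite subgraphs, which yields (i)--(v) by passage to the limit from the finite case and, importantly, works even when $\Delta$ is unbounded; your approach buys a shorter argument but leans essentially on boundedness. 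Two small points to tighten: for (iii) you need not only positivity but also that $P_t$ is an $\ell^\infty$-contraction before the monotone-exhaustion limit gives $\sum_y\mu(y)p(t,x,y)\leq 1$ --- this follows from the same Euler approximation, since each factor $I+\tfrac{t}{n}\Delta$ has non-negative entries with row sums equal to $1$; and for strict positivity in (ii) you can avoid the small-$t$ domination argument entirely by noting that the $(x,y)$-entry of $(I+\tfrac{t}{n}\Delta)^n$ is bounded below by the contribution of a shortest path from $x$ to $y$ padded with diagonal steps, every factor of which is strictly positive for $n$ large.
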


In \cite{BHLLMY}, Bauer et al. introduced two slightly different curvature conditions which are called $CDE$ and $CDE'$.  Let us now recall the two definitions.

\newtheorem{definition}{\textbf{Definition}}[section]
\begin{definition}
\textnormal{A graph $G$ satisfies the exponential curvature dimension inequality $CDE(x,n,K)$, if for any positive function $f:V\rightarrow \mathbb{R}^+$ such that $\Delta f(x)<0$, we have
$$\Gamma_2(f)(x)-\Gamma\Bigg(f,\frac{\Gamma(f)}{f}\Bigg)(x)\geq \frac{1}{n}(\Delta f)(x)^2+K\Gamma(f)(x),$$
we say that $CDE(n,K)$ is satisfied if $CDE(x,n,K)$ is satisfied for all $x\in V$.}
\end{definition}

\begin{definition}
\textnormal{A graph $G$ satisfies the exponential curvature dimension inequality $CDE'(x,n,K)$, if for any positive function $f:V\rightarrow \mathbb{R}^+$, we have
$$\Gamma_2(f)(x)-\Gamma\Bigg(f,\frac{\Gamma(f)}{f}\Bigg)(x)\geq \frac{1}{n}f(x)^2(\Delta \log f)(x)^2+K\Gamma(f)(x),$$
we say that $CDE'(n,K)$ is satisfied if $CDE'(x,n,K)$ is satisfied for all $x\in V$.}
\end{definition}

Bauer et al. \cite{BHLLMY} established
a discrete analogue of the Li-Yau inequality and derived a heat kernel estimate under the condition of $CDE(n,0)$, as follows:

\begin{proposition}[see \cite{BHLLMY}]
\textnormal{Suppose $G$ satisfies $CDE(n,0)$, then there exists a positive constant $C_1$ such that, for any $x,y \in V$ and $t>0$,
\begin{equation*}
\tag{2.1}
p(t,x,y)\leq \frac{C_1}{V(x,\sqrt{t})}.
\end{equation*}
Furthermore, for any $t>1$, there exists constants $C_2$ and $C_3$ such that
\begin{equation*}
\tag{2.2}
p(t,x,y)\geq C_2\frac{1}{t^n}\exp{\Bigg(-C_3\frac{d^2(x,y)}{t-1}\Bigg)}.
\end{equation*}}
\end{proposition}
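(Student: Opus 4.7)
The plan is to derive both bounds from a discrete Li--Yau type gradient estimate, which is the natural analytic consequence of $CDE(n,0)$. The first step is to test $CDE(n,0)$ against a positive solution $u$ of the heat equation, aiming at a differential Harnack inequality of the schematic form $\Gamma(u)/u^2 - \partial_t u / u \leq n/(2t)$ (up to absolute constants). The role of the correction term $\Gamma(f, \Gamma(f)/f)$ in the very definition of $CDE$ is precisely to make the chain rule manipulations work in the discrete setting, where the classical Bochner identity breaks down; one applies $CDE(n,0)$ to $f = \sqrt{u}$ (or $u$ itself, depending on the normalization) and completes the square in $\partial_t u / u$.

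Once this differential Harnack is available, I would integrate it along a discrete space--time path from $(x,s)$ to $(y,t)$ with $s<t$. Since a path on $G$ is a finite sequence of edges, ``integration'' becomes a telescoping sum, and summing the resulting one-step increments should produce an integrated parabolic Harnack inequality of the form
\begin{equation*}
u(s, x) \leq u(t, y) \left(\frac{t}{s}\right)^{n/2} \exp\!\left( C\,\frac{d(x,y)^2}{t-s} \right),
\end{equation*}
valid for every positive solution. Applying this to $u(\cdot,\cdot) = p(\cdot, x_0, \cdot)$ is the bridge between the curvature condition and quantitative kernel bounds.

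For the upper bound (2.1) I would first establish the on-diagonal estimate $p(t,x,x) \leq C/V(x,\sqrt{t})$: the integrated Harnack lets one lower-bound $p(2t, x, x)$ by a constant multiple of $p(t, x, z)$ for every $z \in B(x, \sqrt{t})$, and summing against $\mu(z)$ together with the stochastic completeness bound $\sum_z \mu(z) p(t, x, z) \leq 1$ of Proposition 2.1(iii) closes the loop. The off-diagonal bound then follows from Cauchy--Schwarz and the semigroup identity (v), namely $p(t,x,y) \leq \sqrt{p(t,x,x)\,p(t,y,y)}$, combined with the polynomial volume comparison that itself is extracted from $CDE(n,0)$ via the Harnack inequality. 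For the lower bound (2.2) I would begin from the on-diagonal lower bound $p(t,x,x) \geq c/t^{n}$ (weaker than the sharp $V(x,\sqrt t)^{-1}$, but all that is needed here; it comes from testing the integrated Harnack on $p(\cdot, x, x)$ itself) and propagate it to $p(t, x, y)$ by one more application of the integrated Harnack, which is where the Gaussian factor $\exp(-C\, d(x,y)^2/(t-1))$ appears.

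The hardest step is the discrete Li--Yau inequality itself: although its statement is formally analogous to the Riemannian one, the proof must handle the genuine gap between $\Delta \log f$ and $\Gamma(f)/f^2$ on a graph, which is why $CDE$ and $CDE'$ are distinct conditions in the first place. The restriction $t > 1$ in (2.2) is an unavoidable artefact of discreteness --- on a graph one cannot shrink the spatial step below $1$, so the chain-of-Harnack argument only makes sense once the time increment is bounded away from zero; I would expect this to be the one place where the Euclidean proof truly cannot be imitated without modification.
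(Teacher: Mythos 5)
This statement is not proved in the paper at all: Proposition 2.2 is quoted verbatim from the cited reference \cite{BHLLMY} (Bauer--Horn--Lin--Lippner--Mangoubi--Yau, \emph{Li-Yau inequality on graphs}), and the authors use it as a black box in Sections 5 and 6. So there is no in-paper proof to compare against; the relevant comparison is with the argument in \cite{BHLLMY}, and your outline does reconstruct its architecture correctly: $CDE(n,0)$ applied to $\sqrt{u}$ yields a discrete Li--Yau estimate of the form $\Gamma(\sqrt{u})/u-\partial_t\sqrt{u}/\sqrt{u}\le n/(2t)$, which telescopes along a space--time path into a Harnack inequality $u(s,x)\le u(t,y)\,(t/s)^{n}\exp\bigl(Cd(x,y)^2/(t-s)\bigr)$, and the two kernel bounds then follow by the standard mass-conservation and semigroup manipulations you describe.

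Two small corrections to your sketch. First, the exponent in the graph Harnack inequality is $n$ rather than $n/2$, which is precisely why the lower bound (2.2) carries the non-sharp factor $t^{-n}$ instead of $V(x,\sqrt{t})^{-1}$ --- this is the defect that motivates the authors to invoke $CDE'$ and Proposition 2.3 from \cite{HLLY} later on. Second, your explanation of the restriction $t>1$ and of the denominator $t-1$ is not quite right: they do not come from the unit spatial step, but from the normalization of the chain-of-Harnack argument, which starts from a uniform positive on-diagonal bound at time $s=1$ and then applies the Harnack inequality over the time gap $t-s=t-1$; this is also exactly how the on-diagonal lower bound $p(t,x,x)\ge c\,t^{-n}$ you mention is obtained (take $x=y$, $d=0$ in the Harnack). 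For the off-diagonal upper bound, note that the route through $p(t,x,y)\le\sqrt{p(t,x,x)p(t,y,y)}$ produces $V(x,\sqrt{t})^{-1/2}V(y,\sqrt{t})^{-1/2}$ and hence needs a volume comparison between balls centered at $x$ and at $y$ to reach the asymmetric form (2.1); this is available in \cite{BHLLMY} but is an extra step your sketch elides.
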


Although the upper bound in the result of Bauer et al. \cite{BHLLMY} is formulated with Gaussian form, the lower bound is not quite Gaussian form and is dependent on the parameter $n$. Based on this, Horn et al. \cite{HLLY}
used $CDE'$ to imply volume doubling and derived the Gaussian type on-diagonal lower bound.
Here, we transcribe a relevant result of \cite{HLLY} as follows:

\begin{proposition}[see \cite{HLLY}]
\textnormal{Suppose $G$ satisfies $CDE'(n,0)$, then for any $x\in V$ and $t>\frac{1}{2}$,
\begin{equation*}
\tag{2.3}
p(2t^2,x,x)\geq \frac{C}{V(x,t)},
\end{equation*}
where $C>0$.}
\end{proposition}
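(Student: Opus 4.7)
The plan is to combine the semigroup identity with Cauchy--Schwarz on a ball, then use stochastic completeness together with an off-diagonal Gaussian upper bound and the volume doubling property (both supplied by $CDE'(n,0)$) to control the mass of $p(t^2, x, \cdot)$ on that ball.

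First, by the symmetry and semigroup property in Proposition 2.1 (i) and (v), one writes
\[
p(2t^2, x, x) = \sum_{z \in V} \mu(z)\, p(t^2, x, z)^{2}.
\]
Restricting the sum to a ball $B(x, Kt)$ for some constant $K>0$ to be chosen large, and applying Cauchy--Schwarz in $\ell^{2}(V, \mu)$, I would get
\[
p(2t^2, x, x) \geq \frac{1}{V(x, Kt)} \left( \sum_{z \in B(x, Kt)} \mu(z)\, p(t^2, x, z) \right)^{2}.
\]
So it then suffices to bound the heat-kernel mass inside $B(x, Kt)$ below by a positive constant, and finally to use volume doubling to replace $V(x, Kt)$ by $V(x, t)$.

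For the mass control I would use two ingredients. First, stochastic completeness $\sum_{z \in V} \mu(z)\, p(t^2, x, z) = 1$, which holds on graphs with bounded Laplacian (guaranteed here by $D_\mu < \infty$). Second, an off-diagonal Gaussian upper bound derivable from the Li--Yau gradient estimate under $CDE'(n,0)$, of the form
\[
p(t^2, x, z) \leq \frac{C_1}{V(x, t)} \exp\!\left( -c_1 \frac{d(x, z)^{2}}{t^{2}} \right),
\]
together with the volume doubling property, which is also a consequence of $CDE'(n,0)$ as shown by Horn et al. Decomposing $V \setminus B(x, Kt)$ into annuli $B(x, (k+1)t) \setminus B(x, kt)$ for integer $k \geq K$, volume doubling gives $V(x, (k+1)t) \leq C_2\, (k+1)^{D}\, V(x, t)$ for some $D > 0$, and the off-diagonal estimate then yields
\[
\sum_{d(x,z) > Kt} \mu(z)\, p(t^2, x, z) \leq C_3 \sum_{k \geq K} (k+1)^{D}\, e^{-c_1 k^{2}},
\]
which falls below $1/2$ once $K$ is taken large enough.

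Combining the two bounds gives $\sum_{z \in B(x, Kt)} \mu(z)\, p(t^2, x, z) \geq 1/2$, hence $p(2t^2, x, x) \geq 1/(4\, V(x, Kt))$, and a final use of volume doubling $V(x, Kt) \leq C_4\, V(x, t)$ delivers the claimed bound. The hard part will not be this Cauchy--Schwarz step but rather the preliminary analytic results it stands on: the off-diagonal Gaussian upper bound and the volume doubling property under $CDE'(n,0)$ both rely on a Li--Yau type logarithmic gradient inequality for positive solutions of the heat equation, which is the central technical content of the $CDE'$ framework developed by Horn et al.
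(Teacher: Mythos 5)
This proposition is quoted from Horn--Lin--Liu--Yau \cite{HLLY}; the paper you are being compared against contains no proof of it, so there is no in-paper argument to match. Your sketch is the standard route to an on-diagonal lower bound --- the identity $p(2t^2,x,x)=\sum_z\mu(z)p(t^2,x,z)^2$, Cauchy--Schwarz over a ball $B(x,Kt)$, stochastic completeness to pin the total mass at $1$ (correctly justified via $D_\mu<\infty$; note Proposition 2.1(iii) only gives $\le 1$, so this extra input is genuinely needed), and an integrated tail estimate plus volume doubling --- and it is essentially the argument used in \cite{HLLY} and in the Coulhon--Grigor'yan tradition. The skeleton is sound.

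One ingredient is stated in a form that is not literally available on graphs: the pointwise off-diagonal bound $p(t^2,x,z)\le \frac{C_1}{V(x,t)}\exp\bigl(-c_1 d(x,z)^2/t^2\bigr)$ fails in the regime $d(x,z)\gg t^2$, where the discrete-time structure forces Poissonian decay of order $\exp\bigl(-d\log(d/t^2)\bigr)$, which is \emph{larger} than the Gaussian expression there. The tail sum $\sum_{d(x,z)>Kt}\mu(z)p(t^2,x,z)$ can still be made $<1/2$ --- the Poissonian decay beats polynomial volume growth --- but you must either invoke the graph-appropriate upper bounds (Davies/Pang type, or the restricted-regime Gaussian bounds actually proved in \cite{BHLLMY,HLLY}) or work with an integrated (Davies--Gaffney) estimate rather than a pointwise one. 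You should also say where the hypothesis $t>\frac12$ enters; it is an artifact of discreteness (for small $t$ the ball $B(x,t)$ degenerates to $\{x\}$ and the comparison of $V(x,Kt)$ with $V(x,t)$ via doubling needs $t$ bounded away from $0$). With those repairs the proof is correct, and you are right that the real content lies in the volume doubling and heat kernel upper bounds supplied by the $CDE'(n,0)$ machinery.
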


Without the use of the curvature condition $CDE'$, Lin et al. \cite{LW} only utilized the volume growth condition to obtain a on-diagonal lower estimate of heat kernel on graphs for large time, which is enough to prove the nonexistence of global solution of (3.1) stated in Section 3. We recall it bellow.

\begin{proposition}[see \cite{LW}]
\textnormal{Assume that, for all $x\in V$ and $r\geq r_0$,
$$V(x,r)\leq c_0 r^m,$$
where $r_0,c_0,m$ are some positive constants. Then, for all large enough $t$,
\begin{equation*}
\tag{2.4}
p(t,x,x)\geq \frac{1}{4V(x,C_0 t\log t)},
\end{equation*}
where $C_0>2D_\mu e$.}
\end{proposition}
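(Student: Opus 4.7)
The plan is to use the semigroup property combined with Cauchy--Schwarz on a ball $B(x,R)$ with $R\sim s\log s$, and exploit the boundedness of $\Delta$ (which $D_\mu<\infty$ guarantees, with $\|\Delta\|_{\ell^\infty\to\ell^\infty}\leq 2D_\mu$) to control the heat-kernel tail. By properties (i) and (v) of Proposition~2.1 and Cauchy--Schwarz,
\begin{equation*}
p(2s,x,x)=\sum_{y\in V}\mu(y)p(s,x,y)^{2}\;\geq\;\frac{1}{V(x,R)}\Bigl(\sum_{y\in B(x,R)}\mu(y)p(s,x,y)\Bigr)^{\!2}.
\end{equation*}
Since bounded $\Delta$ entails stochastic completeness, $\sum_y\mu(y)p(s,x,y)=1$, and it suffices to choose $R=R(s)$ so that the escape mass $E(s,R):=\sum_{y\notin B(x,R)}\mu(y)p(s,x,y)$ is below $\tfrac12$ for all large $s$.

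To bound $E(s,R)$, I would write $E(s,R)=\bigl(e^{s\Delta}\mathbf{1}_{V\setminus B(x,R)}\bigr)(x)$ and expand the exponential as a power series in $s\Delta$. The locality of the Laplacian---namely, $\Delta^{k}g(x)$ depends only on the values of $g$ at vertices within graph distance $k$ of $x$---forces $\Delta^{k}\mathbf{1}_{V\setminus B(x,R)}(x)=0$ for all $k\leq R$, so
\begin{equation*}
E(s,R)\;\leq\;\sum_{k>R}\frac{(2D_\mu s)^{k}}{k!}\;\leq\;C\Bigl(\frac{2eD_\mu s}{R}\Bigr)^{\!R}
\end{equation*}
for some absolute $C$, once $R$ exceeds a fixed multiple of $D_\mu s$. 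Taking $R$ to be any fixed multiple of $s\log s$ with coefficient strictly greater than $2eD_\mu$ drives this tail to zero faster than any polynomial, hence below $\tfrac12$ for $s$ large enough.

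Plugging this back in and setting $t=2s$, I obtain $p(t,x,x)\geq 1/(4V(x,R))$ with $R\leq C_0 t\log t$ for a $C_0$ that can be taken strictly above $2eD_\mu$. The volume-growth hypothesis $V(x,r)\leq c_0 r^{m}$ is not used in the derivation of the lower bound itself; it serves only to turn the denominator into a concrete polynomial in $t\log t$, giving the estimate its practical form. The main delicate step is the tail bound: a black-box operator-norm inequality $\|e^{s\Delta}\|_\infty\leq e^{2D_\mu s}$ is far too weak, and the gain allowing $R$ to be only of order $s\log s$ (not $s$) comes entirely from discarding the first $R$ terms of the exponential series via the support-spreading property of $\Delta^{k}$.
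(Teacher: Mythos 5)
The paper does not actually prove this statement: Proposition 2.4 is quoted verbatim from the reference [LW], so there is no internal proof to compare against. Judged on its own, your argument is correct and self-contained within the paper's framework, and it is a genuinely elementary route. The chain
\begin{equation*}
p(2s,x,x)=\sum_{y}\mu(y)p(s,x,y)^2\;\geq\;\frac{1}{V(x,R)}\Bigl(\textstyle\sum_{y\in B(x,R)}\mu(y)p(s,x,y)\Bigr)^{2}\;\geq\;\frac{(1-E(s,R))^2}{V(x,R)}
\end{equation*}
is exactly right, and the two points that need a word of justification are both available inside the paper: (a) you use equality $\sum_y\mu(y)p(s,x,y)=1$, whereas Proposition 2.1(iii) only states $\leq 1$; equality follows from the paper's own identity (4.4) applied to $u\equiv 1$, since $\Delta^k\mathbf{1}=0$ for $k\geq1$ (this is the standard fact that $D_\mu<\infty$ forces stochastic completeness, but you should cite (4.4) rather than invoke it as a black box); (b) the expansion $E(s,R)=e^{s\Delta}\mathbf{1}_{V\setminus B(x,R)}(x)=\sum_{k>R}\frac{s^k}{k!}\Delta^k\mathbf{1}_{V\setminus B(x,R)}(x)$ is again (4.4) combined with the support-propagation property $\Delta^k g(x)=0$ whenever $g$ vanishes on $B(x,k)$, which holds by induction for locally finite graphs. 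The tail bound via $k!\geq(k/e)^k$ and the bookkeeping $t=2s$, $R\leq C_0 t\log t$ with monotonicity of $r\mapsto V(x,r)$ all check out, and the constant $\tfrac14$ emerges correctly from $E<\tfrac12$.

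Two remarks on how this relates to the cited result. First, you are right that the hypothesis $V(x,r)\leq c_0 r^m$ plays no role in your derivation of (2.4); in this paper it is only used afterwards (in the proof of Theorem 3.1) to convert $V(e,C_0t\log t)$ into $c_0(C_0t\log t)^m$. Second, your method in fact yields more than claimed: since $q=2eD_\mu s/R<1$ already for $R$ a sufficiently large multiple of $s$, the escape mass is small with $R\asymp t$, so the $\log t$ in the radius is not needed for your argument and the stated form follows a fortiori. The $t\log t$ radius and the specific threshold $C_0>2D_\mu e$ are artifacts of the proof in [LW] (which proceeds through a weighted integrated estimate in the spirit of Coulhon--Grigor'yan rather than through the power-series locality of $\Delta$); since the statement only asserts the weaker bound, this discrepancy is harmless.
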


\section{Main results}

In this paper, we study whether or not there exist global solutions to the initial value problem for the semilinear heat equation
\begin{equation}
\tag{3.1}
\left\{
\begin{array}{lc}
u_t=\Delta u + u^{1+\alpha} &\, \text{in $(0,+\infty)\times V$,}\\
u(0,x)=a(x) &\, \text{in $V$,}
\end{array}
\right.
\end{equation}
where $\alpha$ is a positive parameter, $a(x)$ is bounded, non-negative and not trivial in $V$. Without loss of generality, we can assume that $a(e)>0$ with $e\in V$. Throughout the present paper we shall only deal with non-negative solutions so that there is no ambiguity in the meaning of $u^{1+\alpha}$.
We shall also fix the vertex $e$.

For convenience, we state relevant definitions firstly.

\begin{definition}
\textnormal{Assume that $T>0$, a non-negative function $u=u(t,x)$ satisfying (3.1) in $[0,T]\times V$ is called a solution of (3.1) in $[0,T]$, if $u$ is bounded and continuous with respect to $t$. Furthermore, a solution $u$ of (3.1) in $[0,+\infty)$ is a function whose restriction to $[0,T]\times V$ is a solution of (3.1) in $[0,T]$ for any $T>0$. A solution $u$ of (3.1) in $[0,+\infty)$ is also called a global solution of (3.1) in $[0,+\infty)$.}
\end{definition}

\begin{definition}
\textnormal{$\mathcal{F}[0,+\infty)$ is the set of all non-negative continuous(with respect to $t$) functions $u=u(t,x)$ defined in $[0,+\infty)\times V$ satisfying
$$0\leq u(t,x) \leq Mp(t+\gamma,e,x)$$
with some constants $M>0$ and $\gamma>0$. Furthermore, if $u$ is a solution of (3.1) in $[0,+\infty)$ and $u\in \mathcal{F}[0,+\infty)$, then $u$ is called a global solution of (3.1) in $\mathcal{F}[0,+\infty)$.}
\end{definition}


Our main results are stated in the following theorems.

\newtheorem{theorem}{\textbf{Theorem}}[section]
\begin{theorem}
\textnormal{Assume that, for all $x\in V$ and $r\geq r_0$, the volume growth $V(x,r)\leq c_0 r^m$ holds, where $r_0,c_0,m$ are some positive constants. If $0<m\alpha<1$, then there is no non-negative global solution of (3.1) in $[0,+\infty)$ for any bounded, non-negative and non-trivial initial value.}
\end{theorem}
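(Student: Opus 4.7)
The plan is to mimic Fujita's weighted test-function argument, taking the time-reversed heat kernel as the test function and using Proposition 2.4 as the sole ingredient on the heat kernel. Suppose, for contradiction, that a non-negative global solution $u$ of (3.1) exists with $a(e)>0$. For a large parameter $T>0$ to be chosen later, define
$$F_T(t)=\sum_{y\in V}\mu(y)\,p(T-t,e,y)\,u(t,y),\qquad t\in[0,T].$$
Because $u$ restricted to $[0,T]\times V$ is bounded by Definition 3.1 and $\sum_y \mu(y)p(T-t,e,y)\le 1$ by Proposition 2.1(iii), the sum converges absolutely. Differentiating under the sum, using Proposition 2.1(iv) and the self-adjointness of $\Delta$ in $\ell^{2}(V,\mu)$, the linear part cancels, leaving
$$F_T'(t)=\sum_{y\in V}\mu(y)\,p(T-t,e,y)\,u(t,y)^{1+\alpha}.$$

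Next, I would view $\nu_t(y):=\mu(y)p(T-t,e,y)$ as a sub-probability measure of total mass $M_t\le 1$ and apply Jensen's inequality to the convex function $s\mapsto s^{1+\alpha}$ to obtain
$$F_T'(t)\ge\frac{F_T(t)^{1+\alpha}}{M_t^{\alpha}}\ge F_T(t)^{1+\alpha}.$$
Integrating this Bernoulli-type inequality shows that $F_T$ must cease to exist no later than $t^{*}=F_T(0)^{-\alpha}/\alpha$, so if I can force $t^{*}<T$ for some large $T$ the proof is complete.

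To bound $F_T(0)$ from below, keep only the $y=e$ term and apply Proposition 2.4: for all $T$ sufficiently large,
$$F_T(0)\ge a(e)\mu(e)\,p(T,e,e)\ge\frac{a(e)\mu(e)}{4V(e,C_0 T\log T)}\ge\frac{c_1}{(T\log T)^{m}},$$
where the last step uses $V(e,r)\le c_0 r^{m}$ for $r\ge r_0$ (valid once $C_0T\log T\ge r_0$). Hence
$$t^{*}\le\frac{1}{\alpha c_1^{\alpha}}(T\log T)^{m\alpha}.$$
When $0<m\alpha<1$ the right-hand side is $o(T)$ as $T\to\infty$, so $t^{*}<T$ for every sufficiently large $T$, contradicting the existence of $F_T$ on $[0,T]$.

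The main obstacle is analytic rather than conceptual: rigorously justifying the termwise differentiation of $F_T(t)$ and the discrete summation by parts on a locally finite graph. Both should follow from boundedness of $u$, $u_t=\Delta u+u^{1+\alpha}$ and $\Delta u$ on $[0,T]\times V$ (note that $D_\mu<\infty$ makes $\Delta$ bounded on $\ell^{\infty}$) combined with the $\ell^{1}$-summability of $y\mapsto\mu(y)p(T-t,e,y)$. It is also worth noting that the threshold $m\alpha<1$, rather than the classical Fujita threshold $m\alpha<2$, is precisely the cost of using only the on-diagonal estimate (2.4) with its logarithmic factor; employing the stronger Gaussian lower bound from Proposition 2.3 is what should push the exponent up to $m\alpha<2$ in the subsequent theorems under $CDE'(n,0)$.
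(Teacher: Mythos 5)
Your proposal is correct and follows essentially the same route as the paper: test against the time-reversed heat kernel, use Jensen's inequality to derive the Bernoulli inequality $F_T'\geq F_T^{1+\alpha}$, and contradict the on-diagonal lower bound of Proposition 2.4 (your $F_T(0)$ is exactly the paper's $J_0(T)$). The only cosmetic difference is that the paper regularizes with $p(t-s+\varepsilon,e,\cdot)$ and lets $\varepsilon\to 0^+$ to avoid evaluating the kernel at time zero, and it writes out in full the convergence and summation-by-parts justifications on locally finite graphs that you flag as the remaining analytic work.
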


\begin{theorem}
\textnormal{Assume that $G$ satisfies $CDE(n,0)$ and $V(x,r)\geq c_1 r^m$ with some positive constants $c_1$ and $m$. Suppose for any $\gamma>0$, there exists a positive number $\delta$ such that $0\leq a(x)\leq \delta p(\gamma,e,x)$ in $V$. If $m\alpha>2$, then (3.1) has a global solution $u=u(t,x)$ in $\mathcal{F}[0,+\infty)$, which satisfies $0\leq u(t,x)\leq Mp(t+\gamma,e,x)$, for any $(t,x)\in [0,+\infty)\times V$ and some positive constants $M(\delta)$.}
\end{theorem}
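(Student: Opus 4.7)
The plan is to construct the global solution by monotone Picard iteration on the mild (integral) formulation of (3.1). By Duhamel's principle together with Proposition 2.1(v), any bounded solution of (3.1) must satisfy
\begin{equation*}
u(t,x)=\sum_{y\in V}\mu(y)\,p(t,x,y)\,a(y)+\int_{0}^{t}\sum_{y\in V}\mu(y)\,p(t-s,x,y)\,u(s,y)^{1+\alpha}\,ds.
\end{equation*}
I would set $u_{0}\equiv 0$ and, recursively,
\begin{equation*}
u_{k+1}(t,x)=\sum_{y\in V}\mu(y)\,p(t,x,y)\,a(y)+\int_{0}^{t}\sum_{y\in V}\mu(y)\,p(t-s,x,y)\,u_{k}(s,y)^{1+\alpha}\,ds.
\end{equation*}
The goal is to show that, for $\delta$ small enough in terms of $\gamma$, the sequence $\{u_{k}\}$ is monotone increasing and uniformly dominated by $M\,p(t+\gamma,e,x)$ with $M=2\delta$, whence its pointwise limit is the desired solution in $\mathcal{F}[0,+\infty)$.

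The linear part is immediate from the hypothesis on $a$ and Proposition 2.1(v):
\begin{equation*}
\sum_{y}\mu(y)\,p(t,x,y)\,a(y)\;\leq\;\delta\sum_{y}\mu(y)\,p(t,x,y)\,p(\gamma,e,y)\;=\;\delta\,p(t+\gamma,e,x).
\end{equation*}
For the Duhamel term, assuming inductively $u_{k}(s,y)\leq M\,p(s+\gamma,e,y)$, I would split off one factor and apply the upper bound (2.1) in combination with the volume lower bound $V(e,r)\geq c_{1}r^{m}$:
\begin{equation*}
p(s+\gamma,e,y)^{1+\alpha}\;\leq\;\left(\frac{C_{1}}{c_{1}(s+\gamma)^{m/2}}\right)^{\!\alpha}p(s+\gamma,e,y).
\end{equation*}
Using Proposition 2.1(v) a second time to collapse the $y$-sum, the integral term is bounded by $p(t+\gamma,e,x)$ times
\begin{equation*}
M^{1+\alpha}\Bigl(\tfrac{C_{1}}{c_{1}}\Bigr)^{\!\alpha}\int_{0}^{t}\frac{ds}{(s+\gamma)^{m\alpha/2}},
\end{equation*}
and the hypothesis $m\alpha>2$ is exactly what forces this improper integral to converge as $t\to\infty$, yielding a uniform bound of the form $C_{*}\,\gamma^{1-m\alpha/2}M^{1+\alpha}$. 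With $M=2\delta$, one picks $\delta$ so small that $\delta+C_{*}\gamma^{1-m\alpha/2}(2\delta)^{1+\alpha}\leq 2\delta$; this closes the induction. Monotonicity $u_{k}\leq u_{k+1}$ follows by induction from $u_{0}\leq u_{1}$, the positivity of $p$, and the monotonicity of $t\mapsto t^{1+\alpha}$ on $[0,\infty)$.

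With uniform domination and monotonicity in hand, the pointwise limit $u(t,x)=\lim_{k}u_{k}(t,x)$ exists, obeys $0\leq u\leq M\,p(t+\gamma,e,x)$, and, by monotone convergence, solves the mild integral equation. Because $D_{\mu}<\infty$ makes $\Delta$ bounded on $\ell^{\infty}$ and $u^{1+\alpha}$ is then bounded and continuous in $t$, differentiating the mild formulation recovers the classical PDE (3.1) together with $u(0,x)=a(x)$ and continuity of $u$ in $t$, placing $u$ in $\mathcal{F}[0,+\infty)$. I expect the main technical obstacle to be precisely the heat-kernel manipulation in the Duhamel term: one must split $p^{1+\alpha}$ as $p^{\alpha}\!\cdot p$ and apply (2.1) only to the $p^{\alpha}$ factor, since bounding the whole $p^{1+\alpha}$ uniformly would destroy the semigroup identity that collapses the $y$-sum back to $p(t+\gamma,e,x)$. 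This splitting is what converts the on-diagonal-type Gaussian upper bound into the time-integrability threshold $m\alpha/2>1$ that underlies the Fujita phenomenon on graphs.
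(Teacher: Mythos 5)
Your proposal is correct and its analytic core coincides with the paper's: you use the same mild formulation (the paper's (5.1)), the same Picard iteration (your $u_{k+1}$ is the paper's $u_{n+1}=u_0+\Phi u_n$ up to an index shift), and, most importantly, the same key estimate --- splitting $p^{1+\alpha}=p^{\alpha}\cdot p$, bounding only the $p^{\alpha}$ factor by $(C_1 c_1^{-1})^{\alpha}(s+\gamma)^{-m\alpha/2}$ via (2.1) and $V(e,r)\geq c_1 r^m$, and collapsing the remaining $y$-sum with the semigroup identity so that $m\alpha>2$ gives a convergent time integral. This is exactly the content of the paper's Lemma 5.1, and your closing remark about why the splitting is forced is the right diagnosis. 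Where you genuinely diverge is the convergence mechanism: the paper proves a Lipschitz estimate $\|\Phi u-\Phi v\|\leq\widetilde{C}(1+\alpha)M^{\alpha}\|u-v\|$ (its Lemma 5.3) and runs a contraction argument in the weighted sup norm $\|v\|=\sup |v|/\rho$, whereas you exploit the monotonicity of $t\mapsto t^{1+\alpha}$ and the positivity of the heat kernel to get an increasing, uniformly dominated sequence and pass to the limit by monotone convergence. Your route is more elementary (it dispenses with Lemma 5.3 entirely, and your smallness condition $\widetilde{C}\,2^{1+\alpha}\delta^{\alpha}\leq 1$ is cleaner than the paper's set $\mathcal{A}$), but it relies on the nonlinearity being monotone and nonnegative; the contraction route yields in addition uniform convergence in the weighted norm (hence continuity of the limit for free) and uniqueness of the fixed point in the ball $\|u\|\leq M$. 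Two small points you should still discharge: a pointwise monotone limit of continuous functions need not be continuous in $t$, so you must recover continuity a posteriori from the integral equation (which works, since the Duhamel term has a bounded integrand); and on a locally finite graph the interchanges of summation with differentiation and with the time integral in the step from the mild to the pointwise equation require the $D_\mu<\infty$ justifications the paper carries out in Case 2 of its argument.
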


\newtheorem{corollary}{\textbf{Corollary}}[section]
\begin{corollary}
\textnormal{Suppose $G$ satisfies $CDE'(n,0)$ and $V(x,r)\simeq r^m$ for some $m>0$.\\
(i) If $0<m\alpha<2$, then there is no non-negative global solution of (3.1) in $[0,+\infty)$ for any bounded, non-negative and non-trivial initial value.\\
(ii) If $m\alpha>2$, then there exists a global solution of (3.1) in $\mathcal{F}[0,+\infty)$ for a sufficiently small initial value.
}
\end{corollary}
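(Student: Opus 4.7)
I would treat the two parts separately.

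\emph{Part (ii).} My plan is to apply Theorem 3.2 directly. The hypothesis $V(x,r)\simeq r^m$ supplies the one-sided lower bound $V(x,r)\geq c_1 r^m$ required there, so only the curvature hypothesis needs justification. I claim $CDE'(n,0)\Rightarrow CDE(n,0)$: whenever $\Delta f(x)<0$, the elementary inequality $\log(1+z)\le z$ applied pointwise with $z=f(y)/f(x)-1$ gives
$$f(x)\,\Delta\log f(x)\ =\ \frac{1}{\mu(x)}\sum_{y\sim x}\omega_{xy}\,f(x)\log\frac{f(y)}{f(x)}\ \le\ \frac{1}{\mu(x)}\sum_{y\sim x}\omega_{xy}\,(f(y)-f(x))\ =\ \Delta f(x)\ <\ 0,$$
and squaring both negative sides yields $f(x)^2(\Delta\log f)(x)^2\ge(\Delta f)(x)^2$. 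Hence the right-hand side of $CDE'(n,0)$ dominates that of $CDE(n,0)$ at $x$, and the implication follows. Part (ii) is then a direct invocation of Theorem 3.2.

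\emph{Part (i).} The gap between Theorem 3.1's range $0<m\alpha<1$ and the Fujita-optimal range $0<m\alpha<2$ must be bridged by replacing the weak on-diagonal bound of Proposition 2.4 with the sharp Gaussian on-diagonal bound of Proposition 2.3. Under $V(x,r)\simeq r^m$, Proposition 2.3 gives, for all sufficiently large $t$,
$$p(t,e,e)\ \ge\ \frac{C}{V(e,\sqrt{t/2})}\ \ge\ \frac{c}{t^{m/2}},$$
the authentic Euclidean heat-kernel decay rate. My plan is to rerun the Fujita-type bootstrap of Theorem 3.1 with this rate in place of the weaker $(t\log t)^{-m}$ rate of Proposition 2.4.

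Concretely, assume for contradiction that a bounded non-negative global solution $u$ exists with $a(e)>0$. Positivity and Duhamel's formula yield first
$$u(t,e)\ \ge\ \sum_{y\in V}\mu(y)\,p(t,e,y)\,a(y)\ \ge\ \mu(e)\,a(e)\,p(t,e,e)\ \ge\ c_0\,t^{-m/2}\qquad(t\ge 2),$$
and then the nonlinear Duhamel inequality
$$u(t,e)\ \ge\ \int_0^t\sum_{y\in V}\mu(y)\,p(t-s,e,y)\,u(s,y)^{1+\alpha}\,ds,$$
combined with the Gaussian bounds of Propositions 2.1 and 2.3 to transport the pointwise lower bound at $e$ to a lower bound on a ball $B(e,\sqrt{s})$, will produce by iteration a sequence $u(t,e)\ge c_k\,t^{-m/2+k\delta}$ with exponent gain $\delta=\delta(m,\alpha)>0$ precisely when $m\alpha<2$. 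After finitely many rounds the exponent turns positive and $u(t,e)\to+\infty$ on a bounded time interval, contradicting boundedness.

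\emph{Main obstacle.} The delicate step is the bookkeeping in the iteration: one has to verify that the exponent gain $\delta$ is strictly positive throughout the whole range $0<m\alpha<2$ (and not merely in $0<m\alpha<1$, as forced by the $(t\log t)^{-m}$ rate) and that the constants $c_k$ do not deteriorate too fast across the iterations. Once the rate change from $(t\log t)^{-m}$ to $t^{-m/2}$ is inserted, the remaining arguments should transport essentially verbatim from the proof of Theorem 3.1, so no fundamentally new ingredient beyond Proposition 2.3 is required.
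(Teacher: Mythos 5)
Your part (ii) is correct and is essentially the paper's route: Theorem 3.2 is invoked after observing that $V(x,r)\simeq r^m$ supplies the lower volume bound and that $CDE'(n,0)$ implies $CDE(n,0)$. The paper simply cites Horn et al.\ for that implication, whereas you prove it via $\log(1+z)\le z$; your argument is the standard one and is sound (for $\Delta f(x)<0$ one gets $f(x)\Delta\log f(x)\le \Delta f(x)<0$, and squaring two negative quantities reverses the inequality, so $f(x)^2(\Delta\log f)(x)^2\ge(\Delta f)(x)^2$).

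Part (i), however, has a genuine gap, and moreover rests on a mischaracterization of the proof of Theorem 3.1. That proof contains no bootstrap: its entire content is Lemma 4.1, which for \emph{any} non-negative solution and \emph{all} $t>0$ gives the single-shot inequality $J_0^{-\alpha}-u(t,e)^{-\alpha}\ge\alpha t$, hence $J_0(t)\le(\alpha t)^{-1/\alpha}$, where $J_0(t)=\sum_x\mu(x)p(t,e,x)a(x)\ge\mu(e)a(e)p(t,e,e)$. The only place the range of $m\alpha$ enters is the on-diagonal lower bound substituted for $p(t,e,e)$: with Proposition 2.4's rate $(t\log t)^{-m}$ one is forced to $m\alpha<1$, while with Proposition 2.3's rate $t^{-m/2}$ (available under $CDE'(n,0)$ and $V(x,r)\simeq r^m$) the same two lines give $t^{m\alpha/2}\ge\alpha\overline{C'}^{\alpha}t$, which is absurd for large $t$ whenever $m\alpha<2$. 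You correctly identify Proposition 2.3 as the new ingredient, but instead of feeding it into Lemma 4.1 you propose an iteration that transports a pointwise lower bound at $e$ to a ball $B(e,\sqrt{s})$ and gains an exponent $\delta$ at each step. That scheme is not carried out (you flag the ``bookkeeping'' as unresolved), and it cannot be closed with the estimates stated in the paper: the transport step needs a near-diagonal lower bound $p(s,y,e)\gtrsim s^{-m/2}$ for $d(y,e)\le\sqrt{s}$, i.e.\ an off-diagonal Gaussian lower bound matched to the volume, whereas Proposition 2.3 is purely on-diagonal and Proposition 2.2's lower bound decays like $t^{-n}$ with the (unrelated) curvature-dimension parameter $n$, which would destroy the claimed exponent gain. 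So as written the argument for $0<m\alpha<2$ is not complete; the repair is simply to drop the iteration and combine $J_0^{-\alpha}\ge\alpha t$ with $J_0\ge\overline{C'}t^{-m/2}$.
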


\section{Proof of Theorem 3.1}


We first introduce a lemma which will be used in the proof of Theorem 3.1.

\newtheorem{lemma}{\textbf{Lemma}}[section]
\begin{lemma}
\textnormal{Let $T>0$, if $u=u(t,x)$ is a non-negative solution of (3.1) in $[0,T]$, then we have
$$J_0^{-\alpha}-u(t,e)^{-\alpha}\geq\alpha t \quad (0< t\leq T),$$
where
$$J_0=J_0(t)=\sum_{x\in V} \mu(x)p(t,e,x)a(x).$$}
\end{lemma}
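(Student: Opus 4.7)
The plan is to interpolate between $J_0(t)$ and $u(t,e)$ via an auxiliary function whose $s$-derivative kills the linear part of the equation and isolates the nonlinearity, then integrate a Bernoulli-type ODE inequality. Concretely, fix $t_0 \in (0, T]$ and set
$$G(s) = \sum_{x \in V} \mu(x)\, p(t_0 - s, e, x)\, u(s, x), \qquad s \in [0, t_0].$$
From the fact that $p(\tau, e, \cdot)$ recovers the initial data as $\tau \to 0^+$, one reads off $G(t_0) = u(t_0, e)$, while $G(0) = \sum_x \mu(x)\, p(t_0, e, x)\, a(x) = J_0(t_0)$. The target inequality is therefore $G(0)^{-\alpha} - G(t_0)^{-\alpha} \geq \alpha t_0$.

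Next I would differentiate $G$ in $s$. Using property (iv) to write $\partial_s p(t_0 - s, e, x) = -\Delta_x p(t_0 - s, e, x)$, substituting the equation $u_s = \Delta u + u^{1+\alpha}$ for $u$, and moving the discrete Laplacian across the sum via the self-adjointness of $\Delta$ with respect to $\mu$ (which is built into the symmetry $\omega_{xy} = \omega_{yx}$), the two $\Delta$-contributions cancel and leave
$$G'(s) = \sum_{x \in V} \mu(x)\, p(t_0 - s, e, x)\, u(s, x)^{1+\alpha}.$$
Absolute summability and the right to differentiate termwise follow from the boundedness of $u$ on $[0, T] \times V$ (Definition 3.1), the boundedness of $\Delta$ on $\ell^\infty$ (guaranteed by $D_\mu < \infty$), and property (iii).

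The final ingredient is Jensen's inequality. The measure $d\nu(x) = \mu(x)\, p(t_0 - s, e, x)$ has total mass $M \leq 1$ by property (iii); extending it to a probability measure by placing the defect $1 - M$ on a ghost point at which $u$ is declared to be $0$, and invoking Jensen for the convex function $r \mapsto r^{1+\alpha}$ (which vanishes at $0$, so the ghost point contributes nothing), one obtains
$$G'(s) \,=\, \int u^{1+\alpha}\, d\nu \,\geq\, \Bigl(\int u\, d\nu\Bigr)^{1+\alpha} \,=\, G(s)^{1+\alpha}.$$
Since $a(e) > 0$ and $p > 0$ by property (ii), $G(s) > 0$ on $[0, t_0]$, so this rearranges to $-\tfrac{1}{\alpha} \tfrac{d}{ds} G(s)^{-\alpha} \geq 1$. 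Integrating from $0$ to $t_0$ yields $G(0)^{-\alpha} - G(t_0)^{-\alpha} \geq \alpha t_0$, which is the lemma at $t = t_0$.

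The only delicate point is the clean identity for $G'(s)$: it requires the sums defining $G$ and its formal derivative to converge absolutely and uniformly in $s$ so that termwise differentiation and the discrete summation by parts are legal. This is a routine consequence of the standing boundedness hypotheses, but it is the one piece of bookkeeping that deserves care; everything downstream is elementary ODE comparison.
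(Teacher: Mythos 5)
Your proposal is correct and follows essentially the same route as the paper: the same auxiliary functional $\sum_x \mu(x)\,p(t-s,e,x)\,u(s,x)$, the same cancellation of the Laplacian terms via self-adjointness and the backward heat equation, the same Jensen step with the sub-probability measure, and the same integration of $G'\geq G^{1+\alpha}$. The only differences are cosmetic bookkeeping: the paper shifts the kernel by $\varepsilon$ and passes to the limit $\varepsilon\to 0^+$ where you instead take the endpoint limit at $s=t_0$ directly, and the paper secures positivity of the functional by a Gronwall bound on $u(s,e)$ where you can get it for free from $G'\geq 0$ and $G(0)>0$.
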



\begin{proof}[Proof]
Let $\varepsilon$ be a positive constant and for any fixed $t\in(0,T]$, we put
\begin{equation*}
v_\varepsilon(s,x)=p(t-s+\varepsilon,e,x) \quad (0\leq s\leq t, \; x\in V)
\end{equation*}
and
\begin{equation*}
J_\varepsilon(s)=\sum_{x\in V}\mu(x)v_\varepsilon(s,x)u(s,x) \quad (0\leq s\leq t).
\end{equation*}

(i) We prove that $J_\varepsilon$ is positive for all $s\in[0,t]$.

Since $a(e)=u(0,e)>0$ and $u(s,e)$ is non-negative in $(0,t]$, for all $0\leq s\leq t$, it follows that
\begin{equation*}
\tag{4.1}
\frac{\partial u}{\partial s}(s,e)-\Delta u(s,e)\geq 0.
\end{equation*}

Note that
\begin{equation*}
\begin{split}
\Delta u(s,e)&=\frac{1}{\mu(e)}\sum_{y\sim e}\omega_{ey}\big(u(s,y)-u(s,e)\big)\\
&\geq -\frac{1}{\mu(e)}\sum_{y\sim e}\omega_{ey}u(s,e)\\
&\geq -D_\mu u(s,e),
\end{split}
\end{equation*}
then the inequality (4.1) gives
\begin{equation*}
\frac{\partial u}{\partial s}(s,e)\geq -D_\mu u(s,e),
\end{equation*}
which implies
\begin{equation*}
u(s,e)\geq u(0,e)\exp(-D_\mu s)>0, \quad s\in [0,t].
\end{equation*}

Hence, for all $0\leq s \leq t$, we have
$$\sum_{x\in V}u(s,x)>0.$$

In view of the fact that $v_\varepsilon(s,x)$ is positive in $[0,t]\times V$,
we obtain $J_\varepsilon(s)>0$ in $[0,t]$.

(ii) We prove that $J_\varepsilon$ is differentiable with respect to $s$ and satisfies the following equation
\begin{equation*}
\frac{d}{ds}J_\varepsilon(s)=\sum_{x\in V} \mu(x)v_\varepsilon (s,x)u(s,x)^{1+\alpha}.
\end{equation*}

\textit{Case 1.}
We consider the case where $G$ is a finite connected graph.

Since $\omega_{xy}=\omega_{yx}$, according to the definition of $\Delta$, for any function $f,g\in C(V)$, we have
\begin{equation*}
\tag{4.2}
\sum_{x\in V}\mu(x)\Delta f(x) g(x)=\sum_{x\in V}\mu(x)f(x)\Delta g(x).
\end{equation*}

From the property of the heat kernel, we know that
\begin{equation*}
\frac{\partial}{\partial s}v_\varepsilon=-\Delta v_\varepsilon.
\end{equation*}

Thus
\begin{equation*}
\tag{4.3}
\begin{split}
\frac{d}{ds}J_\varepsilon(s)=&
\sum_{x\in V} \left( \mu(x)\frac{\partial}{\partial s}v_\varepsilon (s,x)u(s,x)+\mu(x)v_\varepsilon (s,x)\frac{\partial}{\partial s}u(s,x) \right)\\
=&\sum_{x\in V} \left( - \mu(x)\Delta v_\varepsilon (s,x)u(s,x)+\mu(x)v_\varepsilon (s,x)
\Big( \Delta u(s,x)+ u(s,x)^{1+\alpha} \Big) \right)\\
=&-\sum_{x\in V} \mu(x)\Delta v_\varepsilon (s,x)u(s,x)+\sum_{x\in V} \mu(x)v_\varepsilon (s,x)\Delta u(s,x)\\
&+\sum_{x\in V} \mu(x)v_\varepsilon (s,x)u(s,x)^{1+\alpha} \\
=&\sum_{x\in V} \mu(x)v_\varepsilon (s,x)u(s,x)^{1+\alpha}.
\end{split}
\end{equation*}

\textit{Case 2.}
We consider the case where $G$ is a locally finite connected graph.

Firstly, we claim that $J_\varepsilon$ exists if $G$ is locally finite.

Since $u$ is bounded, we can assume there exists a constant $A>0$ such that for any $(s,x)\in [0,t]\times V$,
$$|u(s,x)|\leq A.$$

Hence, from the property of the heat kernel, we have
$$J_\varepsilon=\Bigg |\sum_{x\in V}\mu(x)v_\varepsilon (s,x)u(s,x)\Bigg |\leq A\sum_{x\in V}\mu(x)v_\varepsilon (s,x)
\leq A<\infty.$$

Secondly,
we observe that if $G$ is locally finite,
the exchange between summation and derivation in the first step of (4.3) is because $J_\varepsilon(s)$ and $\frac{d}{d s}J_\varepsilon (s)$ both are uniformly convergent.

Indeed, when $\Delta$ is a bounded operator, we have
\begin{equation*}
\tag{4.4}
P_t u(x)=e^{t\Delta}u(x)=\sum_{k=0}^{+\infty}\frac{t^k \Delta^k}{k!}u(x)=\sum_{y\in V}\mu(y)p(t,x,y)u(y),
\end{equation*}
furthermore, we can prove that the summation (4.4) has a nice convergency when $u(x)$ is a bounded function.
The details are as follows:

Assuming that $|u(x)|\leq A$ in $V$, then
\begin{equation*}
|\Delta u(x)|=\Bigg |\frac{1}{\mu(x)}\sum_{y\sim x}\omega_{xy}\big(u(y)-u(x)\big)\Bigg |\leq 2D_\mu A.
\end{equation*}

By iteration, we obtain for any $k\in\mathbb{N}$ and $x\in V$,
$$\big |\Delta^ku(x)\big |\leq2^kD_\mu^kA.$$

Thus for any $t\in (0,T]$ and $x\in V$,
$$\Bigg | \frac{t^k\Delta^k}{k!}u(x) \Bigg | \leq \Bigg | \frac{T^k\Delta^k}{k!}u(x) \Bigg |
\leq \frac{T^k}{k!}2^kD_\mu^kA.$$

In view of
$$\sum_{k=0}^{+\infty}\frac{T^k}{k!}2^kD_\mu^kA=Ae^{2D_\mu T}<\infty,$$
which shows $\sum_{y\in V}\mu(y)p(t,x,y)u(y)$ converges uniformly on $(0,T]$, when $u(x)$ is bounded in $V$.

Since $u(s,x)$ and $u(s,x)^{1+\alpha}$ both are bounded, we can obtain that $J_\varepsilon (s)$ and $\frac{d}{ds}J_\varepsilon(s)$ converge uniformly on $[0,t]$.

Thirdly, we notice that if $G$ is locally finite, (4.2) may not always hold, but for any bounded function
$u$, it satisfies
\begin{equation*}
\tag{4.5}
\sum_{y\in V} \mu(y)\Delta p(t,x,y)u(y)=\sum_{y\in V} \mu(y)p(t,x,y)\Delta u(y).
\end{equation*}

A direct computation yields
\begin{equation*}
\begin{split}
\sum_{y\in V} \mu(y)\Delta p(t,x,y)u(y)=&
\sum_{y\in V}\sum_{z\in V}\omega_{yz}\big(p(t,x,z)u(y)-p(t,x,y)u(y)\big)\\
=&\sum_{y\in V}\sum_{z\in V}\omega_{yz}p(t,x,z)u(y)-\sum_{y\in V}\sum_{z\in V}\omega_{yz}p(t,x,y)u(y)\\
=&\sum_{z\in V}\sum_{y\in V}\omega_{yz}p(t,x,y)u(z)-\sum_{y\in V}\sum_{z\in V}\omega_{yz}p(t,x,y)u(y)\\
=&\sum_{y\in V}\sum_{z\in V}\omega_{yz}p(t,x,y)u(z)-\sum_{y\in V}\sum_{z\in V}\omega_{yz}p(t,x,y)u(y)\\
=&\sum_{y\in V} \mu(y)p(t,x,y)\Delta u(y).
\end{split}
\end{equation*}

Note that the summation can be exchanged, since
\begin{equation*}
\begin{split}
\sum_{y\in V}\sum_{z\in V}\big|\omega_{yz}p(t,x,y)u(z)\big|
&\leq \sum_{y\in V}\mu(y)p(t,x,y)\left(\sum_{z\in V}\frac{\omega_{yz}}{\mu(y)}\left|u(z)\right|\right)\\
&\leq D_\mu A.
\end{split}
\end{equation*}

Finally, we state that if $G$ is locally finite, the interchanges of sum in the third step of (4.3) are again because
of the convergence of the sums.

Noting that $|\Delta u(s,x)| \leq 2D_\mu A$, \, $|u(s,x)^{1+\alpha}|\leq A^{1+\alpha}$,
and
$$\sum_{x\in V} \mu(x)\Delta v_\varepsilon (s,x)u(s,x)=\sum_{x\in V} \mu(x)v_\varepsilon (s,x)\Delta u(s,x),$$
for any $(s,x)\in [0,t]\times V$, we deduce that
$\sum_{x\in V} \mu(x)v_\varepsilon(s,x)\Delta u(s,x)$,\, $\sum_{x\in V} \mu(x)v_\varepsilon(s,x)u(s,x)^{1+\alpha}$
and $\sum_{x\in V} \mu(x)\Delta v_\varepsilon (s,x)u(s,x)$ all are convergent.\\

(iii) Since $v_\varepsilon > 0$ and
\begin{equation*}
\sum_{x\in V}\mu(x)v_\varepsilon(s,x)\leq 1,
\end{equation*}
using the Jensen's inequality to $x^{1+\alpha}$ $(\alpha>0)$ and owing to its convexity, we obtain
\begin{equation*}
\frac{\sum_{x\in V} \mu(x)v_\varepsilon (s,x)u(s,x)^{1+\alpha}}{\sum_{x\in V}\mu(x)v_\varepsilon (s,x)}\geq \left(\frac{\sum_{x\in V} \mu(x)v_\varepsilon (s,x)u(s,x)}{\sum_{x\in V}\mu(x)v_\varepsilon (s,x)}\right)^{1+\alpha},
\end{equation*}
that is,
\begin{equation*}
\begin{split}
&\left(\sum_{x\in V} \mu(x)v_\varepsilon (s,x)u(s,x)\right)^{1+\alpha}\\
\leq&\left(\sum_{x\in V} \mu(x)v_\varepsilon (s,x)u(s,x)^{1+\alpha}\right)
\left(\sum_{x\in V} \mu(x)v_\varepsilon (s,x)\right)^\alpha\\
\leq&\sum_{x\in V} \mu(x)v_\varepsilon (s,x)u(s,x)^{1+\alpha}.
\end{split}
\end{equation*}

It follows that
$$\frac{d}{ds}J_\varepsilon \geq J_\varepsilon^{1+\alpha}.$$

Using the Mean-value theorem, we have
\begin{equation*}
\tag{4.6}
J_\varepsilon(0)^{-\alpha}-J_\varepsilon(t)^{-\alpha}\geq\alpha t.
\end{equation*}

According to (4.4), we can assert that for any bounded function $u$,
\begin{equation*}
\lim_{t\rightarrow 0^+}P_t u(x)=\lim_{t\rightarrow 0^+}\sum_{y\in V} \mu(y)p(t,x,y)u(y)=u(x),
\end{equation*}
from which we will get
\begin{equation*}
\tag{4.7}
J_\varepsilon(t)\rightarrow u(t,e) \quad\quad (\varepsilon\rightarrow 0^+).
\end{equation*}

Moreover, it is not difficult to find that
\begin{equation*}
\tag{4.8}
J_\varepsilon(0)\rightarrow J_0 \quad\quad (\varepsilon\rightarrow 0^+).
\end{equation*}

In fact, if $G$ is a finite connected graph, the (4.8) is obvious.
If $G$ is a locally finite connected graph, because of the uniform convergence of $J_\varepsilon(s)$,
we can exchange limitation with summation and obtain (4.8).

Combining (4.7) and (4.8) into (4.6), for any $t\in (0,T]$, we have
$$J_0^{-\alpha}-u(t,e)^{-\alpha}\geq\alpha t.$$

This completes the proof of Lemma 4.1.
\end{proof}


\begin{proof}[Proof of Theorem 3.1.]
Based on the above Lemmas, we prove Theorem 3.1 by contradiction.

Suppose that there exists a non-negative global solution $u=u(t,x)$ of (3.1) in $[0,+\infty)$, according to Lemma 4.1,
we have for any $t>0$,
$$J_0^{-\alpha}\geq u(t,e)^{-\alpha}+\alpha t \geq \alpha t.$$

Since $V(x,r)\leq c_0 r^m$, $(r\geq r_0)$, from Proposition 2.4, we have for all large enough $t$,
\begin{equation*}
p(t,e,e)\geq \frac{1}{4c_0C_0^m}\left(t\log t\right)^{-m} \quad\, (C_0>2D_\mu e).
\end{equation*}

Hence, for all sufficiently large $t$,
\begin{equation*}
\begin{split}
J_0&=\sum_{x\in V}\mu(x)p(t,e,x)a(x)\\
&\geq \mu(e)a(e)p(t,e,e)\\
&\geq \overline{C}\left(t\log t\right)^{-m},
\end{split}
\end{equation*}
where $\overline{C}=\frac{\mu(e)a(e)}{4c_0C_0^m}>0$ and $C_0>2D_\mu e$.\\

Combining $J_0^{-\alpha}\geq \alpha t$ and $J_0\geq \overline{C}\left(t\log t\right)^{-m}$, for all large enough $t$,
we get
\begin{equation*}
\tag{4.9}
\left(t\log t\right)^{m\alpha}\geq \alpha\overline{C}^\alpha t.
\end{equation*}

However, if $0<m\alpha<1$, we will get a contradiction for large enough $t$.

This completes the proof of Theorem 3.1.
\end{proof}

\section{Proof of Theorem 3.2}

Before proving Theorem 3.2, we consider the following integral equations associated with (3.1) and obtain its solution $u(t,x)$ in $\mathcal{F}(0,+\infty)$.
\begin{equation}
\tag{5.1}
\left\{
\begin{split}
&u(t,x)=u_0(t,x)+\int_0^t\sum\limits_{y\in V}\mu(y)p(t-s,x,y)u(s,y)^{1+\alpha} ds &\quad \text{in $(0,+\infty)\times V$},\\
&u_0(t,x)=\sum\limits_{y\in V}\mu(y)p(t,x,y)a(y) &\quad \text{in $(0,+\infty)\times V$,}
\end{split}
\right.
\end{equation}
where $\alpha>0$, $a(y)$ is bounded, non-negative, not trivial and satisfying $0\leq a(y)\leq\delta p(\gamma,e,y)$ in $V$ with some constants $\gamma>0$ and $\delta>0$. We fix $\gamma$ here and will determine $\delta$ later.

For any function $v(t,x)$ with $|v|\in \mathcal{F}(0,+\infty)$, we can define its norm
\begin{equation*}
\tag{5.2}
||v||=\sup\limits_{t> 0, x\in V}\frac{|v(t,x)|}{\rho (t,x)},
\end{equation*}
where $\rho(t,x)=p(t+\gamma,e,x)$.

Let
\begin{equation*}
(\Phi u)(t,x)= \int_0^t\sum\limits_{y\in V}\mu(y)p(t-s,x,y)u(s,y)^{1+\alpha} ds.
\end{equation*}

We first prove some lemmas which are essential to prove the Theorem 3.2.


\begin{lemma}
\textnormal{If $G$ satisfies $CDE(n,0)$ and $V(x,r)\geq c_1 r^m$ with some positive constants $c_1$ and $m$.
Let $m\alpha>2$, then
$$\Phi \rho\in \mathcal{F}(0,+\infty) \quad \text{and} \quad ||\Phi\rho||\leq \widetilde{C},$$
where $\widetilde{C}$ is a positive constant.}
\end{lemma}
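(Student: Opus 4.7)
The strategy is to bound $(\Phi\rho)(t,x)$ by a constant multiple of $\rho(t,x)$ via an elementary three-step chain: factor $\rho^{1+\alpha}$ as $\rho\cdot\rho^{\alpha}$, use the on-diagonal upper bound from Proposition 2.3 to control the $\rho^{\alpha}$ factor by a function of $s$ alone, and then collapse the remaining expression with the semigroup identity. The subcritical growth hypothesis $m\alpha>2$ enters only at the very end, ensuring the time integral converges.

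First, substituting $\rho(s,y)=p(s+\gamma,e,y)$ and splitting off one copy of $\rho$, I would write
\begin{equation*}
(\Phi\rho)(t,x)=\int_0^t\sum_{y\in V}\mu(y)\,p(t-s,x,y)\,p(s+\gamma,e,y)\cdot p(s+\gamma,e,y)^{\alpha}\,ds.
\end{equation*}
The key observation is that the $p(s+\gamma,e,y)^{\alpha}$ factor admits a $y$-independent pointwise bound: by the Gaussian upper estimate (2.1) of Proposition 2.3 combined with the volume assumption $V(e,r)\ge c_1 r^{m}$,
\begin{equation*}
p(s+\gamma,e,y)^{\alpha}\le\left(\frac{C_1}{V(e,\sqrt{s+\gamma})}\right)^{\!\alpha}\le\frac{C_1^{\alpha}}{c_1^{\alpha}}\,(s+\gamma)^{-m\alpha/2}.
\end{equation*}
Pulling this out of the inner sum and applying the semigroup identity (v) of Proposition 2.1,
\begin{equation*}
\sum_{y\in V}\mu(y)\,p(t-s,x,y)\,p(s+\gamma,e,y)=p(t+\gamma,x,e)=\rho(t,x),
\end{equation*}
I arrive at
\begin{equation*}
(\Phi\rho)(t,x)\le\frac{C_1^{\alpha}}{c_1^{\alpha}}\,\rho(t,x)\int_0^t(s+\gamma)^{-m\alpha/2}\,ds.
\end{equation*}

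Finally, the hypothesis $m\alpha>2$ makes $m\alpha/2>1$, so the time integral is dominated by its improper value
\begin{equation*}
\int_0^\infty (s+\gamma)^{-m\alpha/2}\,ds=\frac{2\gamma^{1-m\alpha/2}}{m\alpha-2},
\end{equation*}
which is finite and independent of $t$ and $x$. Taking $\widetilde{C}=\dfrac{2\,C_1^{\alpha}\,\gamma^{1-m\alpha/2}}{c_1^{\alpha}(m\alpha-2)}$ yields $(\Phi\rho)(t,x)\le\widetilde{C}\,\rho(t,x)$ on $(0,+\infty)\times V$, which is exactly the two claims: $\Phi\rho\in\mathcal{F}(0,+\infty)$ (with the same $\gamma$ and $M=\widetilde{C}$) and $\|\Phi\rho\|\le\widetilde{C}$. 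Non-negativity is immediate from property (ii) of the heat kernel, and continuity in $t$ follows from continuity of $p(\cdot,x,y)$ together with dominated convergence. There is no real obstacle in this argument; the only items worth a careful line are the legitimacy of exchanging $\sum_y$ and $\int_0^t$ (justified by Tonelli, since the integrand is non-negative, with $\sum_y\mu(y)p(t-s,x,y)\le 1$ by property (iii)) and noting that the threshold $m\alpha>2$ is exactly what makes the improper $s$-integral finite.
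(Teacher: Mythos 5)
Your proof is correct and follows essentially the same route as the paper: factor $\rho^{1+\alpha}=\rho\cdot\rho^{\alpha}$, bound $p(s+\gamma,e,y)^{\alpha}\le (C_1c_1^{-1})^{\alpha}(s+\gamma)^{-m\alpha/2}$ via the upper estimate (2.1) and the volume lower bound, collapse the remaining sum with the semigroup identity, and use $m\alpha>2$ to make the improper time integral finite, arriving at the identical constant $\widetilde{C}$. The only nit is a citation slip: estimate (2.1) is Proposition 2.2 (the $CDE(n,0)$ upper bound), not Proposition 2.3.
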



\begin{proof}[Proof]
For any $(t,x)\in(0,+\infty)\times V$,
\begin{equation*}
\begin{split}
(\Phi\rho)(t,x)&=\int_0^t\sum_{y\in V}\mu(y)p(t-s,x,y)\rho(s,y)^{1+\alpha} ds\\
&=\int_0^t\sum_{y\in V}\mu(y)p(t-s,x,y)p(s+\gamma,e,y)p(s+\gamma,e,y)^\alpha ds.
\end{split}
\end{equation*}

Obviously, $\Phi\rho$ is non-negative and continuous with respect to $t$.

According to Proposition 2.2, for $\gamma>0$ and any $s\geq0$, there exists a constant $C_1$
such that
$$p(s+\gamma,e,y)\leq \frac{C_1}{V\big(e,\sqrt{s+\gamma}\big)}.$$

Since
$$V\big(e,\sqrt{s+\gamma}\big)\geq c_1(s+\gamma)^{\frac{m}{2}},$$
we obtain
\begin{equation*}
\tag{5.3}
p(s+\gamma,e,y)\leq C_1c_1^{-1}(s+\gamma)^{-\frac{m}{2}}.
\end{equation*}

Hence,
\begin{equation*}
\begin{split}
(\Phi\rho)(t,x)&\leq\int_0^t\sum_{y\in V}\mu(y)p(t-s,x,y)p(s+\gamma,e,y)(C_1c_1^{-1})^\alpha(s+\gamma)^{-\frac{m\alpha}{2}} ds\\
&\leq(C_1 c_1^{-1})^\alpha\int_0^t(s+\gamma)^{-\frac{m\alpha}{2}} \sum_{y\in V}\mu(y)p(t-s,x,y)p(s+\gamma,e,y)ds\\
&=(C_1 c_1^{-1})^\alpha p(t+\gamma,e,x)\int_0^t(s+\gamma)^{-\frac{m\alpha}{2}}ds.
\end{split}
\end{equation*}

Furthermore,
\begin{equation*}
\tag{5.4}
\begin{split}
\int_0^t(s+\gamma)^{-\frac{m\alpha}{2}}ds&\leq\int_0^{+\infty} (s+\gamma)^{-\frac{m\alpha}{2}}ds\\
&=\frac{-2\gamma}{2-m\alpha}\gamma^{-\frac{m\alpha}{2}},
\end{split}
\end{equation*}
it is worth noting that the existence of the integral in (5.4) is based on the assumption $m\alpha>2$.

Thus for any $(t,x)\in(0,+\infty)\times V$,
\begin{equation*}
\tag{5.5}
(\Phi\rho)(t,x)\leq\widetilde{C}p(t+\gamma,e,x),
\end{equation*}
where $\widetilde{C}=\frac{-2\gamma}{2-m\alpha}(C_1 c_1^{-1})^\alpha \gamma^{-\frac{m\alpha}{2}}>0$.\\

It follows that
$$\Phi\rho\in \mathcal{F}(0,+\infty)\quad \text{and} \quad ||\Phi\rho||\leq\widetilde{C}.$$

This completes the proof of Lemma 5.1.
\end{proof}


\begin{lemma}
\textnormal{Under the condition of Lemma 5.1 and $u\in \mathcal{F}(0,+\infty)$, we have
$$\Phi u\in \mathcal{F}(0,+\infty) \quad \text{and} \quad ||\Phi u||\leq\widetilde{C}||u||^{1+\alpha}.$$}
\end{lemma}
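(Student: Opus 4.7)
The plan is to reduce Lemma 5.2 to Lemma 5.1 by the defining property of the norm $\|\cdot\|$. Since $u \in \mathcal{F}(0,+\infty)$, the definition (5.2) gives the pointwise bound $0 \le u(s,y) \le \|u\|\,\rho(s,y)$ for every $(s,y) \in (0,+\infty) \times V$. Because $u$ is non-negative and $1+\alpha > 0$, raising to the power $1+\alpha$ preserves the inequality, so $u(s,y)^{1+\alpha} \le \|u\|^{1+\alpha}\rho(s,y)^{1+\alpha}$.

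The main computation is then a single substitution into the definition of $\Phi$. Using $p(t-s,x,y) \ge 0$ to keep the inequality when summing, I would write
\begin{equation*}
(\Phi u)(t,x) = \int_0^t \sum_{y\in V}\mu(y)\,p(t-s,x,y)\,u(s,y)^{1+\alpha}\,ds \le \|u\|^{1+\alpha}\int_0^t \sum_{y\in V}\mu(y)\,p(t-s,x,y)\,\rho(s,y)^{1+\alpha}\,ds,
\end{equation*}
and recognize the right-hand integral as exactly $\|u\|^{1+\alpha}(\Phi\rho)(t,x)$. By Lemma 5.1, $(\Phi\rho)(t,x) \le \widetilde{C}\,p(t+\gamma,e,x) = \widetilde{C}\,\rho(t,x)$, hence
\begin{equation*}
0 \le (\Phi u)(t,x) \le \widetilde{C}\,\|u\|^{1+\alpha}\,p(t+\gamma,e,x),
\end{equation*}
which shows $\Phi u \in \mathcal{F}(0,+\infty)$ and $\|\Phi u\| \le \widetilde{C}\,\|u\|^{1+\alpha}$.

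The only non-algebraic point to verify is that $\Phi u$ is non-negative and continuous in $t$, as required by the definition of $\mathcal{F}(0,+\infty)$. Non-negativity is immediate from $u \ge 0$ and the positivity of $p$ and $\mu$. For continuity in $t$, the bound $u(s,y)^{1+\alpha} \le \|u\|^{1+\alpha}\rho(s,y)^{1+\alpha}$ combined with the uniform convergence of the heat semigroup sums established in the proof of Lemma 4.1 (the bounded-operator argument for $P_{t-s}$ using $D_\mu < \infty$) justifies differentiating/passing to the limit under the sum and integral; the $s$-integrand is continuous in $t$ uniformly on compact sets, so continuity of $(\Phi u)(t,x)$ in $t$ follows. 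I expect no real obstacle here: the heart of Lemma 5.2 is just the homogeneity-in-$\|u\|$ observation, and all the analytic content (absolute convergence, the volume-growth upper bound on $p$, and integrability in $s$) has already been absorbed into Lemma 5.1.
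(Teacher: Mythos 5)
Your proposal is correct and follows essentially the same route as the paper: bound $u(s,y)^{1+\alpha}\leq \|u\|^{1+\alpha}\rho(s,y)^{1+\alpha}$ pointwise, pull $\|u\|^{1+\alpha}$ out of the integral to recognize $\|u\|^{1+\alpha}(\Phi\rho)(t,x)$, and invoke Lemma 5.1. Your additional remarks on non-negativity and continuity in $t$ are a slight elaboration beyond what the paper records, but they do not change the argument.
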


\begin{proof}[Proof]
Since $u\in \mathcal{F}(0,+\infty)$, we can define its norm and then have
$u(t,x)\leq ||u||\rho(t,x)$ for any $(t,x)\in (0,+\infty)\times V$.

A simple calculations show that
\begin{equation*}
\tag{5.6}
\begin{split}
0\leq (\Phi u)(t,x)&=\int_0^t\sum\limits_{y\in V}\mu(y)p(t-s,x,y)u(s,y)^{1+\alpha} ds\\
&\leq ||u||^{1+\alpha}\int_0^t\sum\limits_{y\in V}\mu(y)p(t-s,x,y)\rho(s,y)^{1+\alpha} ds\\
&= ||u||^{1+\alpha}(\Phi\rho)(t,x).
\end{split}
\end{equation*}

Combining (5.6) with (5.5), we get
$$\Phi u\in \mathcal{F}(0,+\infty) \quad \text{and} \quad ||\Phi u||\leq\widetilde{C}||u||^{1+\alpha}.$$

This completes the proof of Lemma 5.2.
\end{proof}


\begin{lemma}
\textnormal{Under the condition of Lemma 5.1,
we suppose that $u$ and $v$ are in $\mathcal{F}(0,+\infty)$ and satisfy $||u||\leq M$ and $||v||\leq M$ with a positive number $M$. Then we have
$$||\Phi u-\Phi v||\leq\widetilde{C}(1+\alpha)M^\alpha||u-v||.$$}
\end{lemma}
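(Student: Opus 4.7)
The approach is to mimic the proof of Lemma 5.2, replacing the pointwise estimate $u^{1+\alpha}\leq \|u\|^{1+\alpha}\rho^{1+\alpha}$ by a Lipschitz-type estimate for the difference $u^{1+\alpha}-v^{1+\alpha}$, and then reusing the bound on $\Phi\rho$ already supplied by Lemma 5.1.

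First I would write
\begin{equation*}
(\Phi u)(t,x)-(\Phi v)(t,x)=\int_0^t\sum_{y\in V}\mu(y)p(t-s,x,y)\bigl[u(s,y)^{1+\alpha}-v(s,y)^{1+\alpha}\bigr]\,ds,
\end{equation*}
and control the bracketed quantity by applying the mean value theorem to the real function $\varphi(z)=z^{1+\alpha}$ on $[0,\infty)$, whose derivative $\varphi'(z)=(1+\alpha)z^{\alpha}$ is monotone increasing. This yields the pointwise inequality
\begin{equation*}
\bigl|u(s,y)^{1+\alpha}-v(s,y)^{1+\alpha}\bigr|\leq (1+\alpha)\,\max\bigl(u(s,y),v(s,y)\bigr)^{\alpha}\,|u(s,y)-v(s,y)|.
\end{equation*}

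Next I would plug in the hypotheses $\|u\|\leq M$, $\|v\|\leq M$, which give $\max(u,v)(s,y)\leq M\rho(s,y)$, together with $|u(s,y)-v(s,y)|\leq \|u-v\|\,\rho(s,y)$. Multiplying these estimates produces
\begin{equation*}
\bigl|u(s,y)^{1+\alpha}-v(s,y)^{1+\alpha}\bigr|\leq (1+\alpha)M^{\alpha}\|u-v\|\,\rho(s,y)^{1+\alpha},
\end{equation*}
and substituting this into the integral representation above gives
\begin{equation*}
\bigl|(\Phi u)(t,x)-(\Phi v)(t,x)\bigr|\leq (1+\alpha)M^{\alpha}\|u-v\|\cdot(\Phi\rho)(t,x).
\end{equation*}

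Finally I would invoke Lemma 5.1, which supplies $(\Phi\rho)(t,x)\leq \widetilde{C}\rho(t,x)$ for all $(t,x)\in(0,+\infty)\times V$, and divide by $\rho(t,x)$ before taking the supremum to obtain the desired bound $\|\Phi u-\Phi v\|\leq \widetilde{C}(1+\alpha)M^{\alpha}\|u-v\|$. No real obstacle is expected; the only mildly delicate point is ensuring the summation and integration can be interchanged freely with the pointwise estimate, but this is immediate since both $u^{1+\alpha}$ and $v^{1+\alpha}$ are already shown (in Lemma 5.2) to yield absolutely convergent expressions against the kernel $\mu(y)p(t-s,x,y)$, so the difference is also absolutely summable/integrable and termwise manipulation is legitimate.
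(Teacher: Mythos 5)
Your proposal is correct and follows essentially the same route as the paper: the pointwise Lipschitz estimate $|u^{1+\alpha}-v^{1+\alpha}|\leq(1+\alpha)\max\{u,v\}^{\alpha}|u-v|$ (which the paper quotes as an elementary inequality and you derive via the mean value theorem), the bounds $\max\{u,v\}\leq M\rho$ and $|u-v|\leq\|u-v\|\rho$, and then the reduction to $(\Phi\rho)(t,x)\leq\widetilde{C}\rho(t,x)$ from Lemma 5.1. Your closing remark on absolute summability is exactly the content of the paper's ``Case 2'' for locally finite graphs, so nothing is missing.
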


\begin{proof}[Proof]
Since $u,v\in \mathcal{F}(0,+\infty)$, for any $(t,x)\in [0, \infty)\times V$, we get
$$\big|u(t,x)-v(t,x)\big| \leq |u(t,x)|-|v(t,x)| \leq 2M \rho(t,x),$$
which implies $|u-v|\in \mathcal{F}(0,+\infty)$.

By using of the elementary inequality
$$|p^{1+\alpha}-q^{1+\alpha}|\leq(1+\alpha)|p-q|\max\{p^\alpha, q^\alpha\} \quad\,\, (q\geq0, p\geq0),$$
we have
\begin{equation*}
\begin{split}
\big|u(s,y)^{1+\alpha}-v(s,y)^{1+\alpha}\big|&\leq
(1+\alpha)|u(s,y)-v(s,y)|\max\{u(s,y)^{\alpha},v(s,y)^{\alpha}\}\\
&\leq (1+\alpha)M^\alpha\rho(s,y)^\alpha\big|u(s,y)-v(s,y)\big|\\
&\leq (1+\alpha)M^\alpha\rho(s,y)^\alpha||u-v||\rho(s,y)\\
&=(1+\alpha)M^\alpha\rho(s,y)^{1+\alpha}||u-v||.
\end{split}
\end{equation*}

\textit{Case 1.}
When $G$ is a finite connected graph, for any $(t,x)\in (0,+\infty)\times V$, we find that
\begin{equation*}
\tag{5.7}
\begin{split}
&\big|\Phi u(t,x)-\Phi v(t,x)\big|\\
=&\Bigg|\int_0^t\sum\limits_{y\in V}\mu(y)p(t-s,x,y)\Big(u(s,y)^{1+\alpha}-v(s,y)^{1+\alpha}\Big) ds\Bigg|\\
\leq & \int_0^t\sum\limits_{y\in V}\mu(y)p(t-s,x,y)\Big|u(s,y)^{1+\alpha}-v(s,y)^{1+\alpha}\Big| ds\\
\leq & (1+\alpha)M^\alpha||u-v||\int_0^t\sum\limits_{y\in V}\mu(y)p(t-s,x,y)\rho(s,y)^{1+\alpha} ds\\
=&(1+\alpha)M^\alpha||u-v||(\Phi\rho)(t,x)\\
\leq & (1+\alpha)M^\alpha||u-v||\widetilde{C}\rho(t,x),
\end{split}
\end{equation*}
thus
\begin{equation*}
\tag{5.8}
||\Phi u-\Phi v||\leq\widetilde{C}(1+\alpha)M^\alpha||u-v||.
\end{equation*}

\textit{Case 2.}
When $G$ is a locally finite connected graph, we shall make an annotation on the above calculation.

Since $u\in \mathcal{F}(0,+\infty)$ and $||u||\leq M$, we have
$$0\leq u(t,x)\leq Mp(t+\gamma, e,x).$$

By (5.3), we know that
$$p(t+\gamma, e,x)\leq C_1c_1^{-1}\gamma^{-\frac{m}{2}},$$
hence for any $(t,x)\in (0,+\infty)\times V$, we deduce that
$$0\leq u(t,x)\leq A,$$
where $A=MC_1c_1^{-1}\gamma^{-\frac{m}{2}}$.

Similarly, $v$ also satisfies $0\leq v(t,x)\leq A$.

Hence, $\sum_{y\in V}\mu(y)p(t-s,x,y)u(s,y)^{1+\alpha}$ and
$\sum_{y\in V}\mu(y)p(t-s,x,y)v(s,y)^{1+\alpha}$ both are convergent, which shows that
\begin{equation*}
\begin{split}
&\sum\limits_{y\in V}\mu(y)p(t-s,x,y)u(s,y)^{1+\alpha}-\sum\limits_
{y\in V}\mu(y)p(t-s,x,y)v(s,y)^{1+\alpha}\\
=&\sum\limits_{y\in V}\mu(y)p(t-s,x,y)\Big(u(s,y)^{1+\alpha}-v(s,y)^{1+\alpha}\Big).
\end{split}
\end{equation*}

Based on the above discussion, we verify the validity of inequalities (5.7) and (5.8) under the condition that $G$ is locally finite.

The proof of Lemma 5.3 is complete.
\end{proof}


\begin{proof}[Proof of Theorem 3.2.]
(i) We construct the solution of (5.1) in $\mathcal{F}(0,+\infty)$.

Setting a iteration relation
\begin{equation*}
\tag{5.9}
u_{n+1}=u_0+\Phi u_n \quad \quad (n=0,1,\cdots)
\end{equation*}
with $u_0$ given by (5.1) and $u_n\in\mathcal{F}(0,+\infty)$, \,$(n=1,2,\cdots)$.

Since $0\leq a(y)\leq\delta p(\gamma,e,y)$, for any $(t,x)\in(0,+\infty)\times V$, we have
\begin{equation*}
\begin{split}
0\leq u_0(t,x)&\leq\delta\sum_{y\in V}\mu(y)p(t,x,y)p(\gamma,e,y)\\
&=\delta p(t+\gamma,e,x),
\end{split}
\end{equation*}
which shows $u_0\in\mathcal{F}(0,+\infty)$ and $||u_0||\leq\delta$.

According to Lemma 5.2, we obtain the inequalities
\begin{equation*}
\begin{split}
||u_{n+1}||\leq ||u_0||+||\Phi u_n||\leq ||u_0||+\widetilde{C}||u_n||^{1+\alpha},
\end{split}
\end{equation*}
that is,
\begin{equation*}
||u_{n+1}|| \leq \delta+\widetilde{C}||u_n||^{1+\alpha}, \quad \quad (n=0,1,\cdots).
\end{equation*}

It is easy to observe that
$$\lim_{\delta\rightarrow 0}\frac{\delta^{\frac{\alpha}{2}}\big(1+\delta^{\frac{\alpha}{4}}\big)^{1+\alpha}}{\delta^{\frac{\alpha}{4}}}=0,$$
so there exist some $\delta<1$ such that $\delta^{\frac{\alpha}{2}}\big(1+\delta^{\frac{\alpha}{4}}\big)^{1+\alpha}<\delta^{\frac{\alpha}{4}}$.

Setting $$\mathcal{A}\equiv \left\{ \delta: \; 0<\delta<1, \; \delta^{\frac{\alpha}{2}}\big(1+\delta^{\frac{\alpha}{4}}\big)^{1+\alpha}<\delta^{\frac{\alpha}{4}} \,\, \textrm{and} \,\, \widetilde{C}\delta^{\frac{\alpha}{2}}<1 \right\}.$$

For any $\delta\in\mathcal{A}$, we have
\begin{equation*}
\begin{split}
||u_0||&\leq\delta,\\
||u_1||&\leq\delta+\widetilde{C}\delta^{1+\alpha}<\delta+\delta^{1+\frac{\alpha}{2}},\\
||u_2||&\leq\delta+\widetilde{C}\big(\delta+\delta^{1+\frac{\alpha}{2}}\big)^{1+\alpha}\\
&<\delta+\delta^{1+\frac{\alpha}{2}}\big(1+\delta^{\frac{\alpha}{2}}\big)^{1+\alpha}\\
&<\delta+\delta^{1+\frac{\alpha}{2}}\big(1+\delta^{\frac{\alpha}{4}}\big)^{1+\alpha},\\
||u_3||&\leq\delta+\widetilde{C}\big[\delta+\delta^{1+\frac{\alpha}{2}}\big(1+\delta^{\frac{\alpha}{4}}\big)^{1+\alpha}\big]^{1+\alpha}\\
&<\delta+\delta^{1+\frac{\alpha}{2}}\big[1+\delta^{\frac{\alpha}{2}}\big(1+\delta^{\frac{\alpha}{4}}\big)^{1+\alpha}\big]^{1+\alpha}\\
&<\delta+\delta^{1+\frac{\alpha}{2}}\big(1+\delta^{\frac{\alpha}{4}}\big)^{1+\alpha},\\
&\cdots\\
||u_n||&<\delta+\delta^{1+\frac{\alpha}{2}}\big(1+\delta^{\frac{\alpha}{4}}\big)^{1+\alpha},\\
&\cdots
\end{split}
\end{equation*}
thus we can assume that $||u_n||\leq M$, $(n=0,1,\cdots)$ with a constant $M=M(\delta)$ satisfying
\begin{equation*}
\tag{5.10}
M(\delta)\rightarrow 0^+ \quad\, (\delta\rightarrow 0^+).
\end{equation*}

From (5.10), we can choose a constant $\delta\in\mathcal{A}$ such that
\begin{equation*}
\kappa\equiv \widetilde{C}(1+\alpha)M(\delta)^\alpha<1.
\end{equation*}

Note that
$$u_{n+2}-u_{n+1}=\Phi u_{n+1}-\Phi u_{n},$$
it follows Lemma 5.3 that
\begin{equation*}
\tag{5.11}
||u_{n+2}-u_{n+1}||\leq \kappa||u_{n+1}-u_n|| \quad\, (n=0,1,\cdots).
\end{equation*}

Since $\kappa<1$, the inequality (5.11) implies that $u_n$ converges with respect to the norm $||\cdot||$.

Moreover, for any Cauchy sequence $\{u_n\}$ in $\mathcal{F}(0,\infty)$,
one can easily conclude that $\{u_n\}$ is convergent.

Hence, there exists a function $u\in\mathcal{F}(0,+\infty)$ such that
\begin{equation*}
\tag{5.12}
||u_n-u||\rightarrow 0 \quad\, ( n\rightarrow \infty).
\end{equation*}

Utilizing (5.9) and (5.12) leads us to the assertion that $u$ is a solution of (5.1) in $\mathcal{F}(0,+\infty)$.\\

(ii) We prove that the solution $u(t,x)$ of (5.1) constructed above satisfies (3.1).

For any $T>0$, since $u\in \mathcal{F}(0,T]$, we derive from (5.3) that $u$ is bounded and continuous with respect to $t$ in $(0,T]\times V$.

Taking a small positive number $\varepsilon$, we put
\begin{equation*}
(\Phi_\varepsilon u)(t,x)=\int_0^{t-\varepsilon} \sum_{y\in V}\mu(y)p(t-s,x,y)u(s,y)^{1+\alpha} ds,
\end{equation*}
where $0<\varepsilon< t\leq T$ and $x\in V$.

Obviously, $\Phi_\varepsilon u$ tends to $\Phi u$ in $[\sigma,T]\times V$ as $\varepsilon\rightarrow 0^+$, here $\sigma$ is an arbitrary positive number and $\sigma>\varepsilon$.

\textit{Case 1.}
If $G$ is a finite connected graph, recalling an important property of heat kernel:
\begin{equation}
\tag{5.13}
p_t(t,x,y)=\Delta_x p(t,x,y)=\Delta_y p(t,x,y),
\end{equation}
we have
\begin{equation}
\tag{5.14}
\begin{split}
\frac{\partial}{\partial t}(\Phi_\varepsilon u)&=\sum_{y\in V}\mu(y)p(\varepsilon,x,y)u(t-\varepsilon,y)^{1+\alpha}+\int_0^{t-\varepsilon} \sum_{y\in V}
\mu(y)p_t(t-s,x,y)u(s,y)^{1+\alpha} ds\\
&=\sum_{y\in V}\mu(y)p(\varepsilon,x,y)u(t-\varepsilon,y)^{1+\alpha}+\int_0^{t-\varepsilon} \sum_{y\in V}
\mu(y)\Delta_x p(t-s,x,y)u(s,y)^{1+\alpha} ds\\
&\equiv I_1+I_2.
\end{split}
\end{equation}

Owing to the boundedness of $u^{1+\alpha}$, it is immediate from (4.4) that
$I_1$ tends to $u(t,x)^{1+\alpha}$ in $[\sigma,T]\times V$ as $\varepsilon\rightarrow 0^+$.
On the other hand, when $\varepsilon\rightarrow 0^+$, $I_2$ converges in $[\sigma,T]\times V$ to a function
\begin{equation*}
\varphi(t,x)=\int_0^{t} \sum_{y\in V}\mu(y) \Delta_x p(t-s,x,y)u(s,y)^{1+\alpha}ds.
\end{equation*}

Letting $\varepsilon\rightarrow 0^+$ in (5.14), we obtain for any $(t,x)\in[\sigma,T]\times V$,
\begin{equation*}
\tag{5.15}
\begin{split}
\frac{\partial}{\partial t}(\Phi u)(t,x)&=u(t,x)^{1+\alpha}+\varphi(t,x)\\
&=u(t,x)^{1+\alpha}+\Delta_x(\Phi u)(t,x).
\end{split}
\end{equation*}

Since $u=u_0+\Phi u$\, and \,$\frac{\partial}{\partial t}u_0=\Delta u_0$, for any $(t,x)\in[\sigma,T]\times V$,
we conclude that
\begin{equation*}
\tag{5.16}
\begin{split}
u_t&=\Delta u_0+\frac{\partial}{\partial t}\Phi u\\
&=\Delta u_0+\Delta(\Phi u)+u^{1+\alpha}\\
&=\Delta u+u^{1+\alpha}.
\end{split}
\end{equation*}

Because of the arbitrariness of $\sigma$, the (5.16) is true for all $(t,x)\in(0,T]\times V$.

Furthermore, we can prove that the initial-value condition is satisfied in the sense that
$$u(t,x)\rightarrow a(x) \quad\, (t\rightarrow 0^+),$$
from which we can extend $u(t,x)$ to $t=0$ and set $u(0,x)=a(x)$.

By the arbitrariness of $T$, we can deduce that the solution $u(t,x)$ of (5.1) constructed above is the required global solution of (3.1) in $\mathcal{F}[0,+\infty)$.

\textit{Case 2.}
If $G$ is a locally finite connected graph, as before, some facts need to be verified:

(a) \begin{equation*}
\frac{\partial}{\partial t}\left(\sum_{y\in V}\mu(y)p(t-s,x,y)u(s,y)^{1+\alpha}\right)
=\sum_{y\in V}\mu(y)p_t(t-s,x,y)u(s,y)^{1+\alpha};
\end{equation*}

(b) \begin{equation*}
\int_0^{t} \sum_{y\in V}\mu(y) \Delta_x p(t-s,x,y)u(s,y)^{1+\alpha}ds
=\Delta\left(\int_0^{t} \sum_{y\in V}\mu(y)  p(t-s,x,y)u(s,y)^{1+\alpha}ds\right).
\end{equation*}

Note that $u$ is bounded and $D_\mu<\infty$, we deduce that $\Delta u^{1+\alpha}$ is bounded too.

Following (4.5) and (5.13), we find that
\begin{equation*}
\begin{split}
\sum_{y\in V}\mu(y)p_t(t-s,x,y)u(s,y)^{1+\alpha} &=\sum_{y\in V}\mu(y)\Delta_yp(t-s,x,y)u(s,y)^{1+\alpha}\\
&=\sum_{y\in V}\mu(y)p(t-s,x,y)\Delta u(s,y)^{1+\alpha},
\end{split}
\end{equation*}
thus $\sum_{y\in V}\mu(y)p_t(t-s,x,y)u(s,y)^{1+\alpha}$ converges uniformly, from which we can see that the fact (a) is valid.

Similar to the proof of (4.5), the fact (b) is also true due to the absolute convergence of sums.

In view of (a) and (b), for a locally finite graph we will have the same conclusion as a finite graph.

This completes the proof of Theorem 3.2.
\end{proof}

\section{Proof of Corollary 3.1}

\begin{proof}[Proof of Corollary 3.1.]

As a direct consequence of Theorem 3.1, if we add a curvature condition $CDE'(n,0)$ to $G$, then we conclude that there is no global solution to problem (3.1) for the case of $0< m\alpha <2$

Actually, by Proposition 2.3, for any $t>\max\left\{\frac{1}{2}, \, 2r_0^2\right\}$, we have
\begin{equation*}
p(t,e,e)\geq c_0^{-1}C\left(\frac{t}{2}\right)^{-\frac{m}{2}}.
\end{equation*}

Hence
\begin{equation*}
J_0\geq \overline{C'}t^{-\frac{m}{2}},
\end{equation*}
where $\overline{C'}=2^{\frac{m}{2}}c_0^{-1}C\mu(e)a(e)>0$ and $t>\max\left\{\frac{1}{2}, \, 2r_0^2\right\}$.\\

Combining $J_0^{-\alpha}\geq \alpha t$ and $J_0\geq \overline{C'}t^{-\frac{m}{2}}$, for any $t>\max\left\{\frac{1}{2}, \, 2r_0^2\right\}$, we have
\begin{equation*}
\tag{6.1}
t^{\frac{m\alpha}{2}}\geq \alpha\overline{C'}^\alpha t.
\end{equation*}

However, if $0<m\alpha<2$, the above inequality (6.1) is
distinctly not true for a sufficiently large $t$. This proves the
assertion in the first part of Corollary 3.1.

On the other hand, Horn et al. \cite{HLLY} have concluded that $CDE'(n,0)$ implies $CDE(n,0)$. Thus
the assertion in the second part of Corollary 3.1 can be obtained
by replacing $CDE(n,0)$ with $CDE'(n,0)$ in Theorem 3.2.

This completes the proof of Corollary 3.1.
\end{proof}

\section{Example and numerical experiments}

In this section, we give an example to illustrate our result
asserted in Corollary 3.1

It is well known that the integer grid $\mathbb{Z}^m$ admits the uniform volume growth of degree $m$. In \cite{BHLLMY},
Bauer et al. proved that $\mathbb{Z}^m$ satisfies $CDE(2m,0)$ and $CDE'(4.53m,0)$ for the normalized graph Laplacian, from which we can deduce the existence and non-existence of global solution to problem (3.1) in $\mathbb{Z}^m$ with the normalized graph Laplacian, as follows:\\
\textbf{Proposition 7.1} \,
\textnormal{Let $G$ be $\mathbb{Z}^m$ with $\mu(x)\equiv \deg(x)$.\\
(i) If $0<m\alpha<2$, then there is no non-negative global solution of (3.1) in $[0,+\infty)$ for any bounded,
non-negative and non-trivial initial value.\\
(ii) If $m\alpha>2$, then there exists a global solution of (3.1) in $\mathcal{F}[0,+\infty)$ for a sufficiently small initial value. }

For example, we consider a circle $\mathrm{C}_\mathrm{n}$ (as shown in Figure 1) which satisfies $CDE'(n',0)$ for some number $n'$ related to $\mathrm{n}$. And then the problem (3.1) can be written as
\begin{equation*}
\tag{7.1}
\left\{
\begin{array}{lc}
u_t(t,x_1)=\frac{1}{2}\big(u(t,x_6)+u(t,x_2)\big)-u(t,x_1)+u(t,x_1)^{1+\alpha},\\
u_t(t,x_2)=\frac{1}{2}\big(u(t,x_1)+u(t,x_3)\big)-u(t,x_2)+u(t,x_2)^{1+\alpha},\\
u_t(t,x_3)=\frac{1}{2}\big(u(t,x_2)+u(t,x_4)\big)-u(t,x_3)+u(t,x_3)^{1+\alpha},\\
u_t(t,x_4)=\frac{1}{2}\big(u(t,x_3)+u(t,x_5)\big)-u(t,x_4)+u(t,x_4)^{1+\alpha},\\
u_t(t,x_5)=\frac{1}{2}\big(u(t,x_4)+u(t,x_6)\big)-u(t,x_5)+u(t,x_5)^{1+\alpha},\\
u_t(t,x_6)=\frac{1}{2}\big(u(t,x_5)+u(t,x_1)\big)-u(t,x_6)+u(t,x_6)^{1+\alpha},\\
u(0,x_1)=a(x_1),\\
u(0,x_2)=a(x_2),\\
u(0,x_3)=a(x_3),\\
u(0,x_4)=a(x_4),\\
u(0,x_5)=a(x_5),\\
u(0,x_6)=a(x_6),
\end{array}
\right.
\end{equation*}
where we take $\mu(x)=\sum_{y\sim x}\omega_{xy}=\deg(x)=2$.

\begin{figure}[!h]
\centering
\includegraphics[width=1.5in, height=1.5in, origin=br]{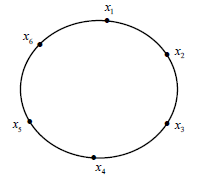}
\begin{minipage}{145mm}
\centering
\caption{$\mathrm{C}_\mathrm{n}$}
\label{fig.1}
\end{minipage}
\end{figure}

If we choose $\alpha=1, a(x_1)=1, a(x_2)=2, a(x_3)=3, a(x_4)=4, a(x_5)=5, a(x_6)=6$, respectively. It is easy to verify that the above choices satisfy the condition of non-existence of global solution to the equations (7.1). The numerical experiment result is shown in Figure 2.

Besides, if we choose $\alpha=3, a(x_1)=1\times 10^{-4}, a(x_2)=2\times 10^{-4}, a(x_3)=3\times 10^{-4}, a(x_4)=4\times 10^{-4}, a(x_5)=5\times 10^{-4}, a(x_6)=6\times 10^{-4}$, respectively. Then the above choices satisfy the condition of existence of global solution to the equations (7.1). The numerical experiment result is shown in Figure 3.

\begin{figure}[!h]
\centering
\includegraphics[width=4.5in, height=3in, origin=br]{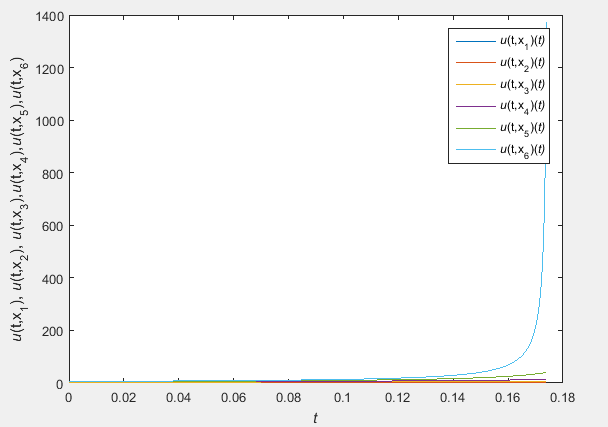}
\begin{minipage}{145mm}
\centering
\caption{Non-existence of global solution to the equations (7.1)}
\label{fig.2}
\end{minipage}
\end{figure}

\begin{figure}[!h]
\centering
\includegraphics[width=4.5in, height=3in, origin=br]{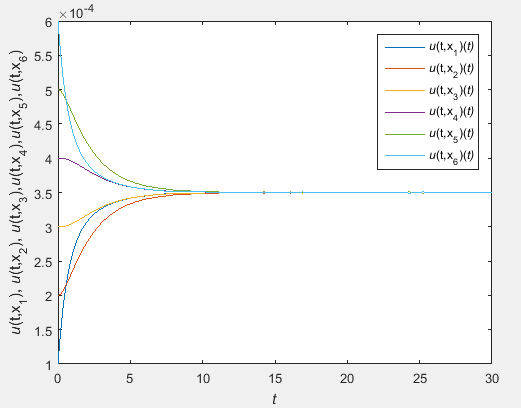}
\begin{minipage}{145mm}
\centering
\caption{Existence of global solution to the equations (7.1)}
\label{fig.3}
\end{minipage}
\end{figure}

\newpage

\section*{Acknowledgments}
This research is supported by the Fundamental Research Funds for the Central
Universities, and the Research Funds of Renmin University of China
under Grant 17XNH106.

\def\refname{\Large\textbf{References}}

\vspace{30pt}

{\setlength{\parindent}{0pt}
Yong Lin,\\
Department of Mathematics, Renmin University of China,  Beijing, 100872, P. R. China\\
linyong01@ruc.edu.cn\\
Yiting Wu,\\
Department of Mathematics, Renmin University of China,  Beijing, 100872, P. R. China\\
yitingly@126.com
                   }
\end{document}